\newcommand{\zjwang}[1]{\ifthenelse{\boolean{showcomments}}
	{ \textcolor[rgb]{1,0,1}{(ZW:  #1)}}{}}
\newcommand{\fliu}[1]{\ifthenelse{\boolean{showcomments}}
	{ \textcolor{blue}{(FL:  #1)}}{}}
\theoremstyle{definition}
\newtheorem{theorem}{Theorem}
\newtheorem{lemma}[theorem]{Lemma}
\newtheorem{corollary}[theorem]{Corollary}
\theoremstyle{definition}
\newtheorem{remark}{Remark}
\title{Convergence Analysis of Dual Decomposition Algorithm in Distributed Optimization: Asynchrony and Inexactness}
\begin{document}
\graphicspath{{Figures/}}

\author{
	Yifan Su, \IEEEmembership{Student Member, IEEE}, 
	Zhaojian Wang, \IEEEmembership{Member, IEEE}, 
	Ming Cao, \IEEEmembership{Senior Member, IEEE}, 
	
	Mengshuo Jia, \IEEEmembership{Student Member, IEEE}, 
	Feng Liu, \IEEEmembership{Senior Member, IEEE}
	\thanks{This work was supported by the Joint Research Fund in Smart Grid (No.U1966601) under cooperative agreement between the National Natural Science Foundation of China (NSFC) and State Grid Corporation of China. \textit{(Corresponding author: Feng Liu)}}
	\thanks{Yifan Su, Zhaojian Wang, Mengshuo Jia, and Feng Liu are with the State Key Laboratory of Power System, Department of Electrical Engineering, Tsinghua University, Beijing 100084, China (e-mail: suyf19@mails.tsinghua.edu.cn; wangzhaojiantj@163.com; jms16@mails.tsinghua.edu.cn; lfeng@tsinghua.edu.cn).}
	\thanks{Zhaojian Wang is also with the Key Laboratory of System Control, and Information Processing, Ministry of Education of China, Department of Automation, Shanghai Jiao Tong University, Shanghai 200240, China.}
	\thanks{Ming Cao is with the Faculty of Science, and Engineering, University of Groningen, Groningen 9747 AG, The Netherlands (e-mail: ming.cao@ieee.org).}
}

\markboth{Journal of \LaTeX\ Class Files,~Vol.~xx, No.~xx, xx~xxxx}%
{Shell \MakeLowercase{\textit{et al.}}: Bare Demo of I EEEtran.cls for IEEE Journals}

\maketitle

\begin{abstract}
Dual decomposition is widely utilized in distributed optimization of multi-agent systems. In practice, the dual decomposition algorithm is desired to admit an asynchronous implementation due to imperfect communication, such as time delay and packet drop. In addition, computational errors also exist when individual agents solve their own subproblems. In this paper, we analyze the convergence of the dual decomposition algorithm in distributed optimization when both the asynchrony in communication and the inexactness in solving subproblems exist.
We find that the interaction between asynchrony and inexactness slows down the convergence rate from $\mathcal{O} ( 1 / k )$ to $\mathcal{O} ( 1 / \sqrt{k} )$.
Specifically, with a constant step size, the value of objective function converges to a neighborhood of the optimal value, and the solution converges to a neighborhood of the exact optimal solution. Moreover, the violation of the constraints diminishes  in $\mathcal{O} ( 1 / \sqrt{k} )$.  
Our result generalizes and unifies the existing ones that only consider either asynchrony or inexactness. Finally, numerical simulations validate the theoretical results.

\end{abstract}

\begin{IEEEkeywords}
Dual decomposition, distributed optimization, asynchronous algorithm, inexact algorithm, multi-agent system
\end{IEEEkeywords}


\section{Introduction}

\subsection{Background}
Dual decomposition is widely utilized in solving distributed optimization problems of multi-agent systems (MASs), such as communication networks \cite{papachristodoulou2010delay, he2011cross, magnusson2017convergence}, computer vision \cite{komodakis2007mrf, strandmark2010parallel}, and  power systems \cite{alkano2017asynchronous, huang2018stochastic, falsone2019decentralized}.
A dual decomposition algorithm usually involves two phases in each iteration: 1) a  coordinator updates the dual variables (Lagrangian multipliers) and 2) individual agents solve their subproblems locally \cite{Chiang1, Chiang2, Chiang3}. Then the dual variables and the solutions to subproblems are exchanged between the coordinator and the agents via the communication network to execute the next iteration. 
During the iterative process,  the asynchrony in communication and the inexactness in solving subproblems may undermine the convergence of the algorithm. In the literature, these two issues are addressed separately even though they always co-exist. In this regard, this paper analyzes the convergence of the dual decomposition based distributed optimization (DD-DO) algorithm considering asynchrony and inexactness simultaneously.

\subsection{Related Works}
Dual decomposition is commonly regarded as a first-order (sub)gradient ascent method with respect to the dual problem. Under ideal conditions, the convergence of dual decomposition has been thoroughly studied. For a diminishing step size $\alpha_k$ satisfying $\sum_{k=0}^\infty \alpha_k \to \infty, \sum_{k=0}^\infty \alpha_k^2 < \infty$, the gradient and subgradient algorithms converge to the optimal values \cite[Prop. 8.2.4]{bertsekas2003convex}. For a constant step size, the gradient algorithm still converges to the optimal value \cite[Prop. 1.2.3]{bertsekas1997nonlinear}, while the subgradient method converges to a neighborhood of the optimal value \cite[Prop. 8.2.2]{bertsekas2003convex}. However, if the asynchrony in communication and the inexactness in solving subproblems are considered, the convergence of dual decomposition requires further studies. Next, we give a short review from these two aspects.



 
\subsubsection{Asynchrony in communication} In practice, the implementation of the DD-DO algorithm usually suffers from asynchrony due to packet drop, time delay in communications, and non-identical computational rates, etc. In this situation, the synchronous DD-DO algorithm may cause longer idle time since the coordinator and the agents have to wait for the latest information from their neighbors in order to execute the next iteration\cite{wang2020asynchronous}. To circumvent this problem, an asynchronous DD-DO algorithm is proposed in \cite{bolognani2014distributed} by updating the dual variables and solving subproblems immediately using the previously stored information, if the latest information happens to be unavailable. In \cite{Steven}, the convergence of the asynchronous DD-DO algorithm is studied, showing that the algorithm converges to the optimal solution under a bounded time delay. In \cite{magnusson2020distributed}, an asynchronous distributed voltage control algorithm based on dual decomposition is formulated and solved. In \cite{lee2015convergence}, a distributed quadratic programming method based on asynchronous dual decomposition is proved to converge with a given stationary probability of asynchrony. Reference \cite{notarnicola2017distributed} proposes an asynchronous partitioned dual decomposition algorithm for fully distributed optimization over peer-to-peer networks, where the algorithm converges in probability with the independent and identically distributed (i.i.d.) delays.

\subsubsection{Inexactness in solving subproblems} In the dual decomposition algorithm, the solutions to subproblems of individual agents will inevitably deviate from the exact optimal solutions, depending on the preset error tolerances of solvers, the types of problems, and the accuracy of parameters. The inexactness issue may lead to considerable error or even divergence of the algorithm due to the accumulation of subproblem errors during iteration. To address the issue, the averaging scheme is suggested in recent years by taking the average of decision variables over the iteration horizon. Reference \cite{devolder2014first} utilizes the inexact oracle to study the dual decomposition algorithm. In \cite{necoara2013rate}, the inexact dual decomposition is proved to have the convergence rates of $\mathcal{O} ( 1 / k )$. An inexact DD-DO algorithm to solve a Laplacian consensus problem is studied in \cite{fazlyab2018distributed}, where the deviation of solution diminishes exponentially considering the exponentially decayed error. In \cite{zhang2020augmented}, the iteration complexity of the inexact augmented Lagrangian method for constrained convex programming is studied, where the convergence rate is $\mathcal{O} ( 1 / k )$ even with the nonsmooth objective function. Reference \cite{mehyar2004optimization} analyses the convergence of dual decomposition with inexact updating of dual variables, where the choice of step size is presented to help the algorithm enter an attraction region in finite steps.

\subsection{Contributions}
It should be noted that the works mentioned above only consider either the asynchrony \cite{bolognani2014distributed, magnusson2020distributed, Steven, lee2015convergence, notarnicola2017distributed} or the inexactness \cite{devolder2014first, necoara2013rate, fazlyab2018distributed, zhang2020augmented, mehyar2004optimization} in dual decomposition separately, although they always co-exist in distributed optimization. In this paper, we analyze the convergence of the DD-DO algorithm considering asynchrony and inexactness simultaneously. Specifically, under mild conditions, we prove the  convergence of the asynchronous and inexact DD-DO algorithm, whose characteristics include:
\begin{enumerate}
\item \textbf{Sublinear convergence rate.} Under ideal conditions, the DD-DO algorithm converges in $\mathcal{O} ( 1 / k )$ from \cite{beck20141, magnusson2018communication}. We prove that the interaction of asynchrony and inexactness slows down the convergence rate to $\mathcal{O} ( 1 / \sqrt{k} )$.  We also show that a \textit{constant} step size is enough to obtain the above convergence performance, which is more applicable in practice than using diminishing step sizes as in \cite{magnusson2017convergence, bertsekas2003convex}.

\item \textbf{Suboptimality and Feasibility.} We show that the value of primal variable converges to a neighborhood of the optimal solution to the primal problem, while the value of primal (dual) objective converges to a neighborhood of the optimal value of the primal (dual) problem, both in $\mathcal{O} ( 1 / \sqrt{k} )$. We also give upper bounds of these neighborhoods, which are positively correlated to the degrees of asynchrony and inexactness.
Moreover, the violation of constraints diminishes in the rate of $\mathcal{O} ( 1 / \sqrt{k} )$, even though the solutions to subproblems are inexact in each iteration.

\item \textbf{Generality.} Our convergence results generalize and unify the existing works that only consider asynchrony \cite{Steven} or inexactness \cite{necoara2013rate}.  By simply setting the inexactness or asynchrony parameter as zero, our result reduces to that given in \cite{Steven} or \cite{necoara2013rate}, respectively.   Our work also first gives the $\mathcal{O}(1/k)$ convergence rate for the \textit{asynchronous} DD-DO algorithm, which, to the best of our knowledge, has not been presented in the existing literature \cite{alkano2017asynchronous, magnusson2020distributed, Steven, lee2015convergence}.

\end{enumerate}

\subsection{Organization}
The rest of this paper is organized as follows. Section II formulates the optimal MAS operation problem, and solves it by the synchronous and exact DD-DO algorithm. In Section III, the asynchrony and inexactness are formulated and analyzed in the DD-DO algorithm. Section IV proves the convergence of the asynchronous and inexact algorithm. Section V gives numerical results and Section VI concludes this paper.

\textit{Notations:} In this paper, we use $\mathbb{R}^n$ ($\mathbb{R}^n_+$) to denote the $n$-dimensional (nonnegative) Euclidean space.
For a column vector $\bm{x}\in \mathbb{R}^n$ (matrix $A\in\mathbb{R}^{m\times n}$), $\bm{x}^T$ ($A^T$) denotes its transpose.
For $\bm{x}, \bm{y} \in \mathbb{R}^n$, we denote the inner product by $\left< \bm{x}, \bm{y} \right> = \bm{x}^T \bm{y}$, and the 2-norm by $\left\| \bm{x} \right\| = \sqrt{\left< \bm{x}, \bm{x} \right>}$. 
For a vector $\bm{x} \in \mathbb{R}^n$, $x_i$ stands for the $i$th entry. $col\left\{ \bm{x}_i \right\}_{i\in\mathcal{I}}$ stacks the vectors $\bm{x}_i$ as a new column vector in the order of the index set $\mathcal{I}$. 
For a matrix $A \in \mathbb{R}^{m\times n}$, $\left\| A \right\|$ and $\left\| A \right\|_F$ stand for the 2-norm and Frobenius norm, respectively. Note that $\left\| A \right\| \le \left\| A \right\|_F$.
For a set $\Omega$, $\left| \Omega \right|$ stands for its cardinality. For a closed convex set $\Omega \subset \mathbb{R}^n$, we define the projection of $\bm{x} \in \mathbb{R}^n$ onto $\Omega$ as $\left[ \bm{x} \right]_\Omega = \arg \min_{y\in\Omega} \left\| \bm{y} - \bm{x} \right\|$. Specially, denote by $\left[\bm{x}\right]^+$  the projection onto $\mathbb{R}^n_+$. This projection is nonexpansive, i.e., $\left\| \left[ \bm{x} \right]_\Omega - \left[ \bm{y} \right]_\Omega \right\| \le \left\| \bm{x} - \bm{y} \right\|, ~ \forall \bm{x},\bm{y} \in \mathbb{R}^n$.
	

\section{DD-DO Algorithm in MASs}
In this section, we formulate the operation problem of the MAS, and solve it by the conventional DD-DO algorithm.

\subsection{Primal Problem}
We focus on the large-scale MAS with a set of agents denoted by $\mathcal{N}$. Each agent $i\in \mathcal{N}$ can make its decision $\bm{x}_i\in\mathbb{R}^{n_i}$ in a local feasible region $\mathcal{X}_i$, and meanwhile causes a cost $f_i\left(\bm{x}_i\right)$. Our objective is to minimize the aggregate cost with restrictions on global constraints and local feasible regions, i.e., solve the following optimization problem, called the primal problem:
\begin{subequations}\label{primal_problem}
\begin{align}
\min_{\bm{x}} ~ & F\left(\bm{x}\right) = \sum_{i\in\mathcal{N}} f_i\left(\bm{x}_i\right) \\
\text{s.t.} ~ & \bm{x}_i \in \mathcal{X}_i, ~ \forall i \in \mathcal{N} \label{primal_problem_cons1} \\
& A \bm{x} \le \bm{b}  \label{primal_problem_cons2}
\end{align}
\end{subequations}
where $\bm{x} = col\left\{ \bm{x}_i \right\}_{i\in\mathcal{N}} \in \mathbb{R}^n$ is the aggregate decision vector; $\mathcal{X} = \Pi_{i\in \mathcal{N}} \mathcal{X}_i$ is the aggregate feasible region; \eqref{primal_problem_cons1} represents local feasible regions of agents; \eqref{primal_problem_cons2} is the global constraints. Matrix $A \in \mathbb{R}^{m\times n}$ and vector $\bm{b} \in \mathbb{R}^m$ are constants. Let $A_i \in \mathbb{R}^{m \times n_i}$ denote the $i$th sliced block of $A = \left( A_1, A_2, ... , A_{\left| \mathcal{N} \right| } \right)$. Then \eqref{primal_problem_cons2} can be replaced by
\begin{align*}
\sum_{i\in \mathcal{N}} A_i \bm{x}_i \le \bm{b}
\end{align*}

Throughout the paper, we make the following assumptions on the primal problem.

\textit{Assumption A1:}

\begin{enumerate}
\item The cost function $f_i \left(\cdot\right)$ is $c_i$-strongly convex and twice differentiable over $\mathcal{X}_i$. Hence the objective function $F\left(\cdot\right)$ is strongly convex with $c_F = \min_{i\in\mathcal{N}} c_i$ and twice differentiable over $\mathcal{X}$.

\item The feasible region $\mathcal{X}_i$ is a nonempty, compact, and convex set. Hence $\mathcal{X}$ is also nonempty, compact, and convex.

\item There exists a strictly feasible interior point in $\mathcal{X}$ such that \eqref{primal_problem_cons2} holds.
\end{enumerate}

Under Assumption A1, problem \eqref{primal_problem}  enjoys a unique primal optimal solution denoted by $\bm{x}^*\in \mathcal{X}$. Let $F^* = F\left( \bm{x}^* \right)$ denote the optimal value of \eqref{primal_problem}.

\subsection{Dual Decomposition}
Considering \eqref{primal_problem}, define the Lagrangian
\begin{align*}
\mathcal{L}\left( \bm{x}; \bm{\lambda}\right) = F\left(\bm{x}\right) + \left< \bm{\lambda}, A \bm{x} - \bm{b} \right>
\end{align*}
where $\bm{\lambda} \in \mathbb{R}^m_+$ is the Lagrangian multiplier of \eqref{primal_problem_cons2}. Thereafter, we call $\bm{x}$ and $\bm{\lambda}$ the primal and dual variables, respectively.

Hence the dual problem of \eqref{primal_problem} is given by 
\begin{align} \label{dual_problem}
\max_{\bm{\lambda} \ge 0} ~ \min_{\bm{x} \in \mathcal{X}} ~ \mathcal{L}\left( \bm{x}; \bm{\lambda}\right)
\end{align}

From the strong convexity of $F\left(\bm{x}\right)$, given $\forall \bm{\lambda} \ge 0$, the Lagrangian $\mathcal{L}\left( \bm{x}; \bm{\lambda}\right)$ is $c_F$-strongly convex in $\bm{x}$ and hence is minimized over $\mathcal{X}$ at a unique point. Let $\bm{x}\left( \bm{\lambda} \right)$ and $D\left( \bm{\lambda} \right)$ denote the optimal solution and value of the inner minimization problem of \eqref{dual_problem}, i.e.,
\begin{align}
\bm{x} \left( \bm{\lambda} \right) & = \arg \min_{\bm{x} \in \mathcal{X}} ~ \mathcal{L}\left( \bm{x}; \bm{\lambda}\right) \label{dual_problem_value} \\
D\left( \bm{\lambda} \right) & = \min_{\bm{x} \in \mathcal{X}} ~ \mathcal{L}\left( \bm{x}; \bm{\lambda}\right) \label{dual_problem_func}
\end{align}

Let $\bm{\lambda}^*$ and $D^*$ denote the optimal solution and value of \eqref{dual_problem}, respectively. Then we have
\begin{align*}
D^* = D\left( \bm{\lambda}^* \right) = \mathcal{L}\left( \bm{x}\left( \bm{\lambda}^* \right); \bm{\lambda}^*\right)
\end{align*}

By Assumption A1 and \cite[Sec. 5.2.3]{boyd2004convex}, the Slater condition of \eqref{primal_problem} holds and the duality gap is zero, i.e., $F^* = D^*$. By the duality theory, the optimal solutions to the primal and dual problems satisfy $\bm{x}^* = \bm{x} \left( \bm{\lambda}^* \right)$. Therefore, to obtain the optimal value and solution of \eqref{primal_problem}, solving its dual problem \eqref{dual_problem} is an alternative method.

The basic idea of dual decomposition is to solve the dual problem in a distributed manner. Note that \eqref{dual_problem_value} and \eqref{dual_problem_func} can be solved separately by agents. For $\forall i \in \mathcal{N}$, define
\begin{align*}
\mathcal{L}_i \left( \bm{x}_i; \bm{\lambda} \right) &= f_i\left(\bm{x}_i\right) + \left< A_i^T \bm{\lambda}, \bm{x}_i \right> \\
\bm{x}_i \left( \bm{\lambda} \right) & = \arg \min_{\bm{x}_i \in \mathcal{X}_i} ~ \mathcal{L}_i \left( \bm{x}_i; \bm{\lambda} \right) \\
D_i \left( \bm{\lambda} \right) & = \min_{\bm{x}_i \in \mathcal{X}_i} ~ \mathcal{L}_i \left( \bm{x}_i; \bm{\lambda}\right) = \mathcal{L}_i \left( \bm{x}_i \left( \bm{\lambda} \right); \bm{\lambda}\right) 
\end{align*}

Hence we have
\begin{align*}
\mathcal{L}\left( \bm{x}; \bm{\lambda}\right) &= \sum_{i\in\mathcal{N}} \mathcal{L}_i \left( \bm{x}_i; \bm{\lambda} \right) - \left<\bm{\lambda}, \bm{b} \right> \\
D\left( \bm{\lambda} \right) &= \sum_{i\in \mathcal{N}} D_i \left( \bm{\lambda} \right) - \left<\bm{\lambda}, \bm{b} \right> \\
\bm{x} \left( \bm{\lambda} \right) &= col\left\{ \bm{x}_i \left( \bm{\lambda} \right) \right\}_{i\in\mathcal{N}}
\end{align*}

The entire distributed algorithm is shown in Algorithm \ref{SyncAlgo}. According to \cite{Steven}, the dual variable ultimately converges to $\bm{\lambda}^*$, and meanwhile, agents obtain the optimal operation decision $\bm{x}^* = col\left\{ \bm{x}_i \left( \bm{\lambda}^* \right) \right\}_{i\in\mathcal{N}}$ by solving subproblems \eqref{subproblem}.

\begin{algorithm}[t]
	\caption{Synchronous and Exact DD-DO Algorithm} 
	\label{SyncAlgo}
	
	\hangafter 1
	\hangindent 1em
	\textbf{Input:} Accuracy tolerance $\epsilon>0$, step size $\alpha>0$, initial dual variable $\bm{\lambda}^0\ge0$, and iteration index $k=0$.
	
	\hangafter 1
	\hangindent 1em
	\textbf{Output:} Optimal operation strategy $\bm{x}^{*}$.
	
	\hangafter 1
	\hangindent 1em
	\textbf{S1 (Solving subproblems):} Agent $i$ obtains its operation strategy $\bm{x}_i^k$ by solving the following subproblem
	\begin{align}
	\min_{\bm{x}_i \in \mathcal{X}_i} ~ f_i\left(\bm{x}_i\right) + \left< A_i^T \bm{\lambda}^k, \bm{x}_i \right> \label{subproblem}
	\end{align}
	
	\hangafter 1
	\hangindent 1em
	\textbf{S2 (Updating dual variable):} The central coordinator updates the dual variable as
	\begin{align} \label{dual_update}
	\bm{\lambda}^{k+1} = \left[ \bm{\lambda}^k + \alpha \left( A \bm{x}^k - \bm{b} \right) \right]^+
	\end{align}
	where $\alpha$ is the step size.
	
	\hangafter 1
	\hangindent 1em
	\textbf{S3:} Evaluate the iterative error $E_k$ as
	\begin{align*} 
	E_k = \left\| \bm{\lambda}^{k+1} - \bm{\lambda}^{k} \right\|
	\end{align*}
	If $E_k \le \epsilon$, $\bm{x}^k$ is recognized as the optimal operation strategy and the algorithm terminates. Otherwise, set $k=k+1$ and go to \textbf{S1}.
	
\end{algorithm}

The direction of updating dual variable \eqref{dual_update} follows the gradient of $D \left( \bm{\lambda} \right)$. Invoking \cite[Prop. 6.1.1]{bertsekas1997nonlinear}, as $\mathcal{X}$ is nonempty and compact, $F\left(\cdot\right)$ is continuous over $\mathcal{X}$, and $\bm{x} \left( \bm{\lambda} \right)$ is the unique optimal solution to \eqref{dual_problem_value}, $D\left( \bm{\lambda} \right)$ is differentiable with the gradient defined as
\begin{align} \label{gradient}
\nabla D\left( \bm{\lambda} \right) = A \bm{x} \left( \bm{\lambda} \right) - \bm{b}
\end{align}
Similarly, $D_i \left( \bm{\lambda} \right)$ is differentiable with the gradient defined as
\begin{align} \label{gradient_i}
\nabla D_i \left( \bm{\lambda} \right) = A_i \bm{x}_i \left( \bm{\lambda} \right)
\end{align}

\begin{remark}
\textit{(Distributed Implementation)} The dual decomposition algorithm can be partially distributed or fully distributed up to the structure of the MAS. Dual decomposition is commonly implemented in a partially distributed manner as Algorithm \ref{SyncAlgo}, where the dual variable is computed by a central coordinator. The algorithm can also be fully distributed depending on the particular sparse communication network, for instance, Internet networks \cite{Steven} and radial distribution grids \cite{magnusson2020distributed}. Without a central coordinator, each agent communicates with its neighbors and updates the dual variable locally. Both the partially and fully distributed algorithms share the same iterative procedure as Algorithm \ref{SyncAlgo}. Hence we follow the partially distributed framework throughout the rest of this paper.

\end{remark}

\subsection{Basic Properties}
To begin with, we analyze the basic properties of dual decomposition. We firstly give a lemma to characterize $\bm{x}_i \left( \bm{\lambda} \right)$ and then give a corollary with respect to $D_i \left( \bm{\lambda} \right)$.

\begin{lemma}
\label{thm:lem1} Suppose Assumption A1 holds. $\bm{x}_i \left( \bm{\lambda} \right)$ is $\left\| A_i \right\|_F / c_i$ - Lipschitz continuous in $\bm{\lambda} \in \mathbb{R}^m_+$.
\end{lemma}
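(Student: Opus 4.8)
The plan is to characterize $\bm{x}_i(\bm{\lambda})$ via the first-order optimality condition of its (strongly convex) subproblem and then exploit the strong monotonicity of $\nabla f_i$. Fix $\bm{\lambda},\bm{\mu}\in\mathbb{R}^m_+$ and abbreviate $\bm{x}_i=\bm{x}_i(\bm{\lambda})$, $\bm{y}_i=\bm{x}_i(\bm{\mu})$. Since $\mathcal{L}_i(\cdot;\bm{\lambda})=f_i(\cdot)+\langle A_i^T\bm{\lambda},\cdot\rangle$ is differentiable and $\mathcal{X}_i$ is closed and convex (Assumption A1(2)), the unique minimizer $\bm{x}_i$ satisfies the variational inequality $\langle \nabla f_i(\bm{x}_i)+A_i^T\bm{\lambda},\,\bm{z}-\bm{x}_i\rangle\ge 0$ for all $\bm{z}\in\mathcal{X}_i$, and analogously $\langle \nabla f_i(\bm{y}_i)+A_i^T\bm{\mu},\,\bm{z}-\bm{y}_i\rangle\ge 0$ for all $\bm{z}\in\mathcal{X}_i$. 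Taking $\bm{z}=\bm{y}_i$ in the first and $\bm{z}=\bm{x}_i$ in the second and adding yields $\langle \nabla f_i(\bm{x}_i)-\nabla f_i(\bm{y}_i),\,\bm{x}_i-\bm{y}_i\rangle\le \langle A_i^T(\bm{\mu}-\bm{\lambda}),\,\bm{x}_i-\bm{y}_i\rangle$.

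Next I would invoke the strong convexity of $f_i$: being $c_i$-strongly convex and twice differentiable over $\mathcal{X}_i$, its gradient is strongly monotone, $\langle \nabla f_i(\bm{x}_i)-\nabla f_i(\bm{y}_i),\,\bm{x}_i-\bm{y}_i\rangle\ge c_i\|\bm{x}_i-\bm{y}_i\|^2$. Chaining this with the displayed inequality and using the adjoint identity $\langle A_i^T\bm{v},\bm{w}\rangle=\langle \bm{v},A_i\bm{w}\rangle$ together with Cauchy--Schwarz and $\|A_i\bm{w}\|\le\|A_i\|\,\|\bm{w}\|\le\|A_i\|_F\,\|\bm{w}\|$ gives $c_i\|\bm{x}_i-\bm{y}_i\|^2\le \|A_i\|_F\,\|\bm{\lambda}-\bm{\mu}\|\,\|\bm{x}_i-\bm{y}_i\|$. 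Dividing by $\|\bm{x}_i-\bm{y}_i\|$ whenever it is nonzero (the claim being trivial otherwise) yields $\|\bm{x}_i(\bm{\lambda})-\bm{x}_i(\bm{\mu})\|\le (\|A_i\|_F/c_i)\,\|\bm{\lambda}-\bm{\mu}\|$, which is exactly the assertion.

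The argument is short and essentially routine; the one place that needs care is the passage from the constrained minimization over $\mathcal{X}_i$ to a usable inequality, i.e., stating the correct first-order (variational-inequality) optimality condition and choosing the test points so that the two conditions add up cleanly with the $\nabla f_i$ terms combining into a strong-monotonicity expression. A couple of minor bookkeeping points remain: handling the degenerate case $\bm{x}_i(\bm{\lambda})=\bm{x}_i(\bm{\mu})$ separately, and recording $\|A_i\|\le\|A_i\|_F$ so that the (slightly looser but more convenient) stated constant $\|A_i\|_F/c_i$ is valid. I expect no difficulty beyond these items.
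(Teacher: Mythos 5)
Your argument is correct and is essentially identical to the paper's proof: both use the variational-inequality (KKT) optimality condition with cross-substituted test points, combine the two inequalities, apply the strong monotonicity of $\nabla f_i$ and Cauchy--Schwarz with $\left\| A_i \right\| \le \left\| A_i \right\|_F$, and handle the degenerate case $\bm{x}_i(\bm{\lambda}) = \bm{x}_i(\bm{\mu})$ separately. No gaps.
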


\begin{proof}
For $\forall \bm{\lambda} \in \mathbb{R}^m$, $\bm{x}_i \left( \bm{\lambda} \right)$ is the optimal solution to \eqref{subproblem}. From the KKT condition of the constrained optimization problem \cite[Thm. 3.24]{ruszczynski2006nonlinear}, we have
\begin{align} \label{KKT}
\left< \nabla f_i \left( \bm{x}_i \left( \bm{\lambda} \right) \right) + A_i^T \bm{\lambda}, \bm{y} - \bm{x}_i \left( \bm{\lambda} \right) \right> \ge 0, ~~ \forall \bm{y} \in \mathcal{X}_i
\end{align}

Replacing $\bm{y}$ by $\bm{x}_i \left( \bm{\mu} \right)$, we have
\begin{align} \label{KKT1}
\left< \nabla f_i \left( \bm{x}_i \left( \bm{\lambda} \right) \right) + A_i^T \bm{\lambda}, \bm{x}_i \left( \bm{\mu} \right) - \bm{x}_i \left( \bm{\lambda} \right) \right> \ge 0
\end{align}
and similarly for $\bm{\mu}$
\begin{align} \label{KKT2}
\left< \nabla f_i \left( \bm{x}_i \left( \bm{\mu} \right) \right) + A_i^T \bm{\mu}, \bm{x}_i \left( \bm{\lambda} \right) - \bm{x}_i \left( \bm{\mu} \right) \right> \ge 0
\end{align}

Flipping the signs of the two terms of $\left< \cdot, \cdot \right>$ in \eqref{KKT1} and adding \eqref{KKT2} gives
\begin{align*}
&\underbrace{ \left< \nabla f_i \left( \bm{x}_i \left( \bm{\lambda} \right) \right) - \nabla f_i \left( \bm{x}_i \left( \bm{\mu} \right) \right), \bm{x}_i \left( \bm{\lambda} \right) - \bm{x}_i \left( \bm{\mu} \right) \right> }_{(\Delta_1)}\\
&\quad \quad \quad \quad \quad \quad \quad \quad \quad \le \underbrace{ \left< A_i^T \bm{\mu} - A_i^T \bm{\lambda}, \bm{x}_i \left( \bm{\lambda} \right) - \bm{x}_i \left( \bm{\mu} \right) \right> }_{(\Delta_2)}
\end{align*}

From the strong convexity of $f_i \left( \cdot \right)$, we have
\begin{align*}
c_i \left\| \bm{x}_i \left( \bm{\lambda} \right) - \bm{x}_i \left( \bm{\mu} \right) \right\|^2 \le \Delta_1
\end{align*}

From the Cauchy$-$Schwarz inequality and $\left\| A \right\| \le \left\| A \right\|_F$, we have
\begin{align*}
\Delta_2 \le \left\| A_i \right\|_F \left\| \bm{\lambda} - \bm{\mu} \right\| \left\| \bm{x}_i \left( \bm{\lambda} \right) - \bm{x}_i \left( \bm{\mu} \right) \right\|
\end{align*}

No matter if $\left\| \bm{x}_i \left( \bm{\lambda} \right) - \bm{x}_i \left( \bm{\mu} \right) \right\| = 0$ or not, it immediately follows that
\begin{align*}
\left\| \bm{x}_i \left( \bm{\lambda} \right) - \bm{x}_i \left( \bm{\mu} \right) \right\| \le \frac{\left\| A_i \right\|_F}{c_i} \left\| \bm{\lambda} - \bm{\mu} \right\|
\end{align*}
which completes the proof. 
\end{proof}

From Lemma \ref{thm:lem1}, we can directly give the following corollary with respect to $D \left( \bm{\lambda} \right)$ by the definition \eqref{gradient} and \eqref{gradient_i}.

\begin{corollary} \label{thm:col2}
Suppose Assumption A1 holds. $D(\bm{\lambda} )$ has the following properties. 
\begin{enumerate}
\item $\nabla D_i \left( \bm{\lambda} \right)$ is  $L_i$-Lipschitz continuous with
\begin{align*}
L_i = \left\| A_i \right\|_F^2 / c_i
\end{align*}

\item $\nabla D \left( \bm{\lambda} \right)$ is  $L_D$-Lipschitz continuous with
\begin{align*}
L_D = \left\| A \right\|_F^2 / c_F \ge \sum_{i\in\mathcal{N}} L_i
\end{align*}

\item For $\forall \bm{y} \in \mathbb{R}^n$, we have
\begin{align} \label{Corollary1_3}
- \left< \bm{y}, \nabla^2 D \left( \bm{\lambda} \right) \bm{y} \right> \le L_D \left\| \bm{y} \right\|^2
\end{align}
\end{enumerate}
\end{corollary}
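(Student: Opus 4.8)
The plan is to derive the three items one after another, using only the explicit gradient formulas \eqref{gradient}--\eqref{gradient_i}, the Lipschitz bound of Lemma~\ref{thm:lem1}, and standard facts about smooth convex and concave functions. For item~1, starting from $\nabla D_i(\bm{\lambda}) = A_i \bm{x}_i(\bm{\lambda})$, I would write, for any $\bm{\lambda},\bm{\mu}\in\mathbb{R}^m_+$,
\begin{align*}
\left\| \nabla D_i(\bm{\lambda}) - \nabla D_i(\bm{\mu}) \right\|
= \left\| A_i \left( \bm{x}_i(\bm{\lambda}) - \bm{x}_i(\bm{\mu}) \right) \right\|
\le \left\| A_i \right\| \left\| \bm{x}_i(\bm{\lambda}) - \bm{x}_i(\bm{\mu}) \right\|,
\end{align*}
then bound the last factor by $(\left\| A_i \right\|_F/c_i)\left\| \bm{\lambda}-\bm{\mu} \right\|$ via Lemma~\ref{thm:lem1} and use $\left\| A_i \right\|\le\left\| A_i \right\|_F$ to arrive at the constant $L_i = \left\| A_i \right\|_F^2/c_i$.

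For item~2, since $D(\bm{\lambda}) = \sum_{i\in\mathcal{N}} D_i(\bm{\lambda}) - \left< \bm{\lambda}, \bm{b} \right>$, the gradient decomposes as $\nabla D(\bm{\lambda}) = \sum_{i\in\mathcal{N}} \nabla D_i(\bm{\lambda}) - \bm{b}$, so a triangle inequality combined with item~1 already shows that $\nabla D$ is $\bigl( \sum_{i\in\mathcal{N}} L_i \bigr)$-Lipschitz. It then remains to observe that $\sum_{i\in\mathcal{N}} L_i \le L_D$: because $A = (A_1,\dots,A_{\left| \mathcal{N} \right|})$ is partitioned by columns, $\left\| A \right\|_F^2 = \sum_{i\in\mathcal{N}} \left\| A_i \right\|_F^2$, and since $c_F = \min_{i\in\mathcal{N}} c_i \le c_i$ for each $i$,
\begin{align*}
\sum_{i\in\mathcal{N}} L_i = \sum_{i\in\mathcal{N}} \frac{\left\| A_i \right\|_F^2}{c_i} \le \sum_{i\in\mathcal{N}} \frac{\left\| A_i \right\|_F^2}{c_F} = \frac{\left\| A \right\|_F^2}{c_F} = L_D.
\end{align*}
This simultaneously delivers the $L_D$-Lipschitz continuity of $\nabla D$ and the claimed inequality $L_D \ge \sum_{i\in\mathcal{N}} L_i$.

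For item~3, the lower bound $-\left< \bm{y}, \nabla^2 D(\bm{\lambda})\bm{y} \right> \ge 0$ follows from concavity of $D$: by \eqref{dual_problem_func}, $D$ is a pointwise minimum over $\bm{x}\in\mathcal{X}$ of maps that are affine in $\bm{\lambda}$, hence concave, so $\nabla^2 D(\bm{\lambda}) \preceq 0$. For the upper bound, item~2 says $-D$ is convex with an $L_D$-Lipschitz gradient, which forces $0 \preceq -\nabla^2 D(\bm{\lambda}) \preceq L_D I$; contracting with $\bm{y}$ (which ranges over $\mathbb{R}^m$, the domain of $\bm{\lambda}$) then yields $-\left< \bm{y}, \nabla^2 D(\bm{\lambda})\bm{y} \right> \le L_D \left\| \bm{y} \right\|^2$. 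The one point that needs care --- and the only real obstacle --- is that item~3 presupposes $\nabla^2 D$ exists, so one must first argue, from the twice-differentiability of each $f_i$ in Assumption~A1 and the characterization \eqref{dual_problem_value} of $\bm{x}(\bm{\lambda})$, that $D$ is twice differentiable on the region of interest; once this is in place, the bound $\left\| \nabla^2 D(\bm{\lambda}) \right\| \le L_D$ is the standard consequence of an $L_D$-Lipschitz gradient, and the remaining manipulations are routine.
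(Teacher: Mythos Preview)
Your proposal is correct and follows exactly the route the paper indicates: the paper does not give a detailed proof but simply states that the corollary follows ``directly'' from Lemma~\ref{thm:lem1} together with the gradient formulas \eqref{gradient}--\eqref{gradient_i}, and your argument is precisely the natural unpacking of that claim. Your remark that item~3 tacitly assumes the existence of $\nabla^2 D$ (and that the quadratic-form vector should live in $\mathbb{R}^m$ rather than $\mathbb{R}^n$) is a valid observation that the paper glosses over.
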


\section{ Asynchronous and Inexact DD-DO Algorithm}
In this section, we formulate the asynchrony in communication and the inexactness in solving subproblems, and propose the asynchronous and inexact DD-DO algorithm.

\subsection{Asynchrony}
Asynchrony commonly exists in realistic  MASs during iteration of the distributed algorithm \cite{wang2020asynchronous2}. Due to time delay, packet drop, and different computational rates, individual agents have to wait for the slowest information, which lengthens the idle time. Severe asynchrony in communication may lead to slow convergence speed or even divergence. The asynchronous dual decomposition algorithm is firstly studied in \cite{Steven}. Each agent (the central coordinator) solves the subproblem (updates the dual variable) with the previously stored information, if the latest is not received.

We follow the formulation of asynchrony as \cite{Steven}. The local clock $\mathcal{K}_i$ ($\mathcal{K}_D$) is the set of time slots when agent $i$ (the central coordinator) takes action, while the global clock $\mathcal{K} = \mathcal{K}_D \cup \mathcal{K}_1 \cup ... \cup \mathcal{K}_{\left|\mathcal{N}\right|}$ is the union of all local clocks.

Noting that the idle time during iteration should be limited, we make the following assumption on asynchrony.

\textit{Assumption A2:} There exists an asynchrony parameter $k_0 \ge 0$ such that for  $\forall k \in \mathcal{K}$, the central coordinator and agents receive information at least once during the interval $\left[k-k_0,k\right]$.

Let $\widehat{\bm{\lambda}}^k$ denote the previously stored dual variable that agent $i$ uses to solve its subproblem at time slot $k \in \mathcal{K}_i$. Similarly, denote by $\widehat{\bm{x}}_i^k$ the previously stored primal variable that the central coordinator uses to update the dual variable.

Under Assumption A2, we have
\begin{align*}
\widehat{\bm{\lambda}}^k & = \bm{\lambda}^{k - \delta_{di} \left(k\right)}, ~~~ 0 \le \delta_{di} \left(k\right) \le k_0 \\
\widehat{\bm{x}}_i^k &= \widetilde{\bm{x}}_i^{k-\delta_{pi} \left(k\right)} , ~~~ 0 \le \delta_{pi} \left(k\right) \le k_0
\end{align*}

It should be noted that, when inexactness is considered, the situation will turn out to be more complicated, as we will discuss in Section III-C.

\subsection{Inexactness}
Inexactness in solving individual subproblems is another crucial issue that may deteriorate the performance of DD-DO algorithms. Slight errors of solving subproblems could accumulate during iteration, which may cause severe inexactness of the algorithm.

We follow the formulation of inexactness given in \cite{necoara2013rate}. Let $\widetilde{\bm{x}}_i \left( \bm{\lambda} \right)$ denote the inexact solution to  subproblem $i$, and let
\begin{align} \label{D_tilde}
\widetilde{D}_i \left( \bm{\lambda} \right) = \mathcal{L}_i \left( \widetilde{\bm{x}}_i \left( \bm{\lambda} \right); \bm{\lambda} \right) = f_i\left(\widetilde{\bm{x}}_i \left( \bm{\lambda} \right)\right) + \left< \bm{\lambda}, A_i\widetilde{\bm{x}}_i \left( \bm{\lambda} \right) \right>
\end{align}
denote the inexact value of the subproblem.

Before analysis, we make the following assumption.

\textit{Assumption A3:} Given $\forall \bm{\lambda} \in \mathbb{R}^m_+$, an inexact solution $\widetilde{\bm{x}}_i \left( \bm{\lambda} \right) \in \mathcal{X}_i$ is obtained. There exists a local inexactness parameter $\varepsilon_i \ge 0$ such that $\forall \bm{\lambda} \in \mathbb{R}^m_+$
\begin{align*}
\left| D_i \left( \bm{\lambda} \right) - \widetilde{D}_i \left( \bm{\lambda} \right) \right| \le \varepsilon_i
\end{align*}
i.e., $\left| \mathcal{L}_i \left( \bm{x}_i \left( \bm{\lambda} \right); \bm{\lambda} \right) - \mathcal{L}_i \left( \widetilde{\bm{x}}_i \left( \bm{\lambda} \right); \bm{\lambda} \right) \right| \le \varepsilon_i$.

Under Assumption A3, we have the following lemma. 
\begin{lemma}
\label{thm:lem3} Suppose Assumptions A1 and A3 hold. The distance between the optimal and inexact solutions is bounded by
\begin{align*}
\left\| \bm{x}_i \left( \bm{\lambda} \right) - \widetilde{\bm{x}}_i \left( \bm{\lambda} \right) \right\|^2 \le \frac{2 \varepsilon_i}{c_i}
\end{align*}
\end{lemma}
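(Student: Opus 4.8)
The plan is to combine the $c_i$-strong convexity of the subproblem objective with the optimality of $\bm{x}_i\left(\bm{\lambda}\right)$ to obtain a quadratic growth bound of the form $\mathcal{L}_i\left(\bm{y};\bm{\lambda}\right)-\mathcal{L}_i\left(\bm{x}_i\left(\bm{\lambda}\right);\bm{\lambda}\right)\ge \frac{c_i}{2}\left\|\bm{y}-\bm{x}_i\left(\bm{\lambda}\right)\right\|^2$ valid for every feasible $\bm{y}$, and then to take $\bm{y}=\widetilde{\bm{x}}_i\left(\bm{\lambda}\right)$, which is feasible by Assumption A3, so that the left-hand side equals exactly $\widetilde{D}_i\left(\bm{\lambda}\right)-D_i\left(\bm{\lambda}\right)\le\varepsilon_i$.

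First I would observe that, since $f_i$ is $c_i$-strongly convex over $\mathcal{X}_i$ by Assumption A1 and the term $\langle A_i^T\bm{\lambda},\bm{x}_i\rangle$ is affine in $\bm{x}_i$, the map $\bm{x}_i\mapsto\mathcal{L}_i\left(\bm{x}_i;\bm{\lambda}\right)$ is $c_i$-strongly convex over $\mathcal{X}_i$ for each fixed $\bm{\lambda}\in\mathbb{R}^m_+$, with gradient $\nabla f_i\left(\bm{x}_i\right)+A_i^T\bm{\lambda}$. Next I would write the first-order optimality (variational) inequality for the constrained minimizer $\bm{x}_i\left(\bm{\lambda}\right)$, exactly as in \eqref{KKT}, namely $\langle \nabla f_i\left(\bm{x}_i\left(\bm{\lambda}\right)\right)+A_i^T\bm{\lambda},\,\bm{y}-\bm{x}_i\left(\bm{\lambda}\right)\rangle\ge 0$ for all $\bm{y}\in\mathcal{X}_i$. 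Combining this with the strong-convexity lower bound anchored at $\bm{x}_i\left(\bm{\lambda}\right)$,
\[
\mathcal{L}_i\left(\bm{y};\bm{\lambda}\right)\ge \mathcal{L}_i\left(\bm{x}_i\left(\bm{\lambda}\right);\bm{\lambda}\right)+\langle \nabla f_i\left(\bm{x}_i\left(\bm{\lambda}\right)\right)+A_i^T\bm{\lambda},\,\bm{y}-\bm{x}_i\left(\bm{\lambda}\right)\rangle+\frac{c_i}{2}\left\|\bm{y}-\bm{x}_i\left(\bm{\lambda}\right)\right\|^2,
\]
the inner-product term is nonnegative and can be discarded, yielding $\mathcal{L}_i\left(\bm{y};\bm{\lambda}\right)\ge\mathcal{L}_i\left(\bm{x}_i\left(\bm{\lambda}\right);\bm{\lambda}\right)+\frac{c_i}{2}\left\|\bm{y}-\bm{x}_i\left(\bm{\lambda}\right)\right\|^2$ for all $\bm{y}\in\mathcal{X}_i$.

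Finally I would invoke Assumption A3, which guarantees $\widetilde{\bm{x}}_i\left(\bm{\lambda}\right)\in\mathcal{X}_i$, and set $\bm{y}=\widetilde{\bm{x}}_i\left(\bm{\lambda}\right)$ in the displayed inequality; recalling $D_i\left(\bm{\lambda}\right)=\mathcal{L}_i\left(\bm{x}_i\left(\bm{\lambda}\right);\bm{\lambda}\right)$ and the definition of $\widetilde{D}_i\left(\bm{\lambda}\right)$ in \eqref{D_tilde}, this gives $\frac{c_i}{2}\left\|\widetilde{\bm{x}}_i\left(\bm{\lambda}\right)-\bm{x}_i\left(\bm{\lambda}\right)\right\|^2\le\widetilde{D}_i\left(\bm{\lambda}\right)-D_i\left(\bm{\lambda}\right)\le\left|D_i\left(\bm{\lambda}\right)-\widetilde{D}_i\left(\bm{\lambda}\right)\right|\le\varepsilon_i$, and rearranging proves the claim. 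I do not expect a genuine obstacle here; the only subtlety worth care is that $\bm{x}_i\left(\bm{\lambda}\right)$ may lie on the boundary of $\mathcal{X}_i$, so one must use the variational inequality \eqref{KKT} rather than $\nabla_{\bm{x}_i}\mathcal{L}_i=0$ when dropping the first-order term — everything else is a direct substitution into the definitions.
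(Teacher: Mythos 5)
Your proposal is correct and follows essentially the same route as the paper: the $c_i$-strong-convexity lower bound of $\mathcal{L}_i\left(\cdot;\bm{\lambda}\right)$ anchored at $\bm{x}_i\left(\bm{\lambda}\right)$, the variational inequality \eqref{KKT} to drop the nonnegative first-order term, and Assumption A3 to bound $\widetilde{D}_i\left(\bm{\lambda}\right)-D_i\left(\bm{\lambda}\right)$ by $\varepsilon_i$. Your remark about using \eqref{KKT} rather than a stationarity condition at a possibly boundary minimizer is exactly the care the paper's argument relies on.
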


\begin{proof}
Given $\bm{\lambda} \in \mathcal{R}^m_+$, from the strong convexity of $\mathcal{L}_i \left( \cdot; \bm{\lambda} \right)$ in $\bm{x}_i$, we have
\begin{align*}
&\mathcal{L}_i \left( \widetilde{\bm{x}}_i \left( \bm{\lambda} \right); \bm{\lambda} \right) \ge \mathcal{L}_i \left( \bm{x}_i \left( \bm{\lambda} \right); \bm{\lambda} \right) + \frac{c_i}{2} \left\| \widetilde{\bm{x}}_i \left( \bm{\lambda} \right) - \bm{x}_i \left( \bm{\lambda} \right) \right\|^2 \\
& \quad \quad \quad \quad \quad \quad + \underbrace{ \left< \nabla f_i \left( \bm{x}_i \left( \bm{\lambda} \right) \right) + A_i^T \bm{\lambda}, \widetilde{\bm{x}}_i \left( \bm{\lambda} \right) - \bm{x}_i \left( \bm{\lambda} \right) \right> }_{(\Delta_3)}
\end{align*}

From \eqref{KKT}, $(\Delta_3)$ is non-negative, which completes the proof. 
\end{proof}

The inexactness of subproblems can be extended to the whole problem directly. Denote the inexactness parameter by $\varepsilon_D = \sum_{i\in \mathcal{N}} \varepsilon_i$. Recalling $c_F = \min_{i\in\mathcal{N}} c_i$, we have
\begin{align*}
& \left| D \left( \bm{\lambda} \right) - \widetilde{D} \left( \bm{\lambda} \right) \right| \le \sum_{i\in \mathcal{N}} \left| D_i \left( \bm{\lambda} \right) - \widetilde{D}_i \left( \bm{\lambda} \right) \right| \le \varepsilon_D \\
& \left\| \bm{x} \left( \bm{\lambda} \right) - \widetilde{\bm{x}} \left( \bm{\lambda} \right) \right\|^2 = \sum_{i\in \mathcal{N}} \left\| \bm{x}_i \left( \bm{\lambda} \right) - \widetilde{\bm{x}}_i \left( \bm{\lambda} \right) \right\|^2 \le \frac{2 \varepsilon_D}{c_F}
\end{align*}



\subsection{Asynchronous and Inexact DD-DO Algorithm}

\begin{algorithm}[t]
	\caption{Asynchronous and Inexact DD-DO Algorithm} 
	\label{AsyncAlgo}
	
	\hangafter 1
	\hangindent 1em
	\textbf{Input:} Accuracy tolerance $\epsilon>0$, step size $\alpha>0$, initial dual variable $\bm{\lambda}^0\ge0$, and iteration index $k=0$.
	
	\hangafter 1
	\hangindent 1em
	\textbf{Output:} Suboptimal operation strategy $\widetilde{\bm{x}}^{*}$.
	
	\hangafter 1
	\hangindent 1em
	\textbf{S1 (Solving subproblems):} If $k \in \mathcal{K}_i$, agent $i$ solves its subproblem and obtains a suboptimal operation strategy $\widetilde{\bm{x}}_i^k$, which satisfies
	\begin{subequations}
	\begin{align}
	& \left\| \widetilde{\bm{x}}_i^k - \bm{x}_i \left( \widehat{\bm{\lambda}}^{k,i} \right) \right\|^2 \le \frac{2 \varepsilon_i}{c_i} \\
	& \bm{x}_i \left( \widehat{\bm{\lambda}}^{k,i} \right) = \arg \min_{\bm{x}_i \in \mathcal{X}_i} f_i\left(\bm{x}_i\right) + \left< A_i^T \widehat{\bm{\lambda}}^{k,i}, \bm{x}_i \right> \label{inexact_sub} \\
	& \widehat{\bm{\lambda}}^{k,i} = \bm{\lambda}^{k - \delta_{di} \left(k\right)}, ~~~ 0 \le \delta_{di} \left(k\right) \le k_0 \label{asyn_primal_var}
	\end{align}
	\end{subequations}
	where $\delta_{di} \left(k\right)$ is the time delay with respect to $k$. Otherwise, $\widetilde{\bm{x}}_i^k = \widetilde{\bm{x}}_i^{k-1}$ holds.

	\hangafter 1
	\hangindent 1em
	\textbf{S2 (Updating dual variable):} If $k \in \mathcal{K}_D$, the central coordinator updates the dual variable as
	\begin{subequations}
	\begin{align}
	\bm{\lambda}^{k+1} & = \left[ \bm{\lambda}^k + \alpha \bm{\nu}^k \right]^+ \\
	\bm{\nu}^k &= A \widehat{\bm{x}}^k - \bm{b} \label{nu_def} \\
	\widehat{\bm{x}}_i^k &= \widetilde{\bm{x}}_i^{k-\delta_{pi} \left(k\right)} , ~~~ 0 \le \delta_{pi} \left(k\right) \le k_0 \label{asyn_dual_var} 
	\end{align}
	\end{subequations}
	where $\bm{\nu}^k$ is the estimated gradient and $\delta_{pi} \left(k\right)$ is the time delay with respect to $k$. Otherwise, $\bm{\lambda}^{k+1} = \bm{\lambda}^k$ holds.
	
	\hangafter 1
	\hangindent 1em
	\textbf{S3:} Evaluate the iterative error $E_k$ as
	\begin{align*} 
	E_k = \left\| \bm{\lambda}^{k+1} - \bm{\lambda}^{k} \right\|
	\end{align*}
	If $k \in \mathcal{K}_D$ and $E_k \le \epsilon$, the algorithm is regarded to converge and the iteration terminates. Otherwise, set $k=k+1$ and go to \textbf{S1}.
	
\end{algorithm}

Algorithm \ref{AsyncAlgo} gives an asynchronous and inexact version of Algorithm \ref{SyncAlgo}. 
Here, the information flow of $\bm{x}$ and $\bm{\lambda}$ can be further rewritten as
\begin{align*}
& \widehat{\bm{x}}_i^k = \widetilde{\bm{x}}_i^{k - \delta_{pi}\left(k\right)} \\
& \left\| \widetilde{\bm{x}}_i^{k - \delta_{pi}\left(k\right)} - \bm{x}_i \left( \widehat{ \bm{\lambda}} ^{k - \delta_{pi}\left(k\right),i} \right) \right\|^2 \le \frac{2 \varepsilon_i}{c_i} \\
& \bm{x}_i \left( \widehat{ \bm{\lambda}} ^{k - \delta_{pi}\left(k\right),i} \right) = \bm{x}_i \left( \bm{\lambda} ^{k - \delta_{pi}\left(k\right) - \delta_{di}\left(k - \delta_{pi}\left(k\right)\right)} \right)
\end{align*}

For simplicity, define $\delta_i^k := \delta_{pi}\left(k\right) + \delta_{di}\left(k - \delta_{pi}\left(k\right)\right)$ and $\widetilde{\bm{x}}_i ( \bm{\lambda} ^{k - \delta_i^k} ) := \widehat{\bm{x}}_i^k$. Under Assumption A2, $0 \le \delta_i^k \le 2k_0$ holds. Then we have
\begin{subequations} 
\begin{align}
& \widehat{\bm{x}}_i^k = \widetilde{\bm{x}}_i \left( \bm{\lambda} ^{k - \delta_i^k} \right), ~ 0 \le \delta_i^k \le 2k_0 \label{asyn_inext_defin} \\
& \left\| \widetilde{\bm{x}}_i \left( \bm{\lambda} ^{k - \delta_i^k} \right) - \bm{x}_i \left( \bm{\lambda} ^{k - \delta_i^k} \right) \right\|^2 \le \frac{2 \varepsilon_i}{c_i}
\end{align}
\end{subequations}

\section{Main Result}
In this section, we analyze the convergence of the asynchronous and inexact DD-DO algorithm.

\subsection{Bound of the Sum-of-Square of Dual Deviations}
Define the dual deviation $\bm{\sigma}^k := \bm{\lambda}^{k+1} - \bm{\lambda}^k$ and the sum-of-square of dual deviations $S^k := \sum_{\kappa = 0}^k \left\| \bm{\sigma}^\kappa \right\|^2$. We turn to prove that $\sqrt{S^k}$ increases not faster than $\mathcal{O} (\sqrt{k})$, starting with the following two lemmas.

\begin{lemma}
\label{thm:lem4} In Algorithm 2,  $\forall k\in \mathcal{K}$
\begin{align} \label{Lemma3}
\left< \bm{\nu}^k, \bm{\sigma}^k \right> \ge \left\| \bm{\sigma}^k \right\|^2/\alpha, ~~ \forall k \in \mathcal{K}
\end{align}
\end{lemma}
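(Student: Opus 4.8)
The plan is to split on whether the coordinator acts at step $k$. If $k\notin\mathcal{K}_D$, then by construction $\bm{\lambda}^{k+1}=\bm{\lambda}^k$, so $\bm{\sigma}^k=\bm{0}$ and both sides of \eqref{Lemma3} vanish; the inequality holds trivially. The substantive case is $k\in\mathcal{K}_D$, where $\bm{\lambda}^{k+1}=\bigl[\bm{\lambda}^k+\alpha\bm{\nu}^k\bigr]^+$ is the Euclidean projection of $\bm{\lambda}^k+\alpha\bm{\nu}^k$ onto the closed convex cone $\mathbb{R}^m_+$.

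The key tool is the standard variational characterization of the projection: for a closed convex set $\Omega$ and the projection $\bm{z}=[\bm{y}]_\Omega$, one has $\langle \bm{y}-\bm{z},\,\bm{w}-\bm{z}\rangle\le 0$ for every $\bm{w}\in\Omega$. I would apply this with $\bm{y}=\bm{\lambda}^k+\alpha\bm{\nu}^k$, $\bm{z}=\bm{\lambda}^{k+1}$, $\Omega=\mathbb{R}^m_+$, and the admissible test point $\bm{w}=\bm{\lambda}^k$, which lies in $\mathbb{R}^m_+$ because $\bm{\lambda}^0\ge\bm 0$ and the iteration preserves nonnegativity. Writing $\bm{\lambda}^k-\bm{\lambda}^{k+1}=-\bm{\sigma}^k$, the projection inequality becomes $\langle -\bm{\sigma}^k+\alpha\bm{\nu}^k,\,-\bm{\sigma}^k\rangle\le 0$, i.e.\ $\|\bm{\sigma}^k\|^2-\alpha\langle\bm{\nu}^k,\bm{\sigma}^k\rangle\le 0$, which rearranges to exactly \eqref{Lemma3} after dividing by $\alpha>0$.

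There is essentially no hard step here; the only thing to be careful about is the bookkeeping around the asynchronous clock (so that the claim is asserted and used for all $k\in\mathcal{K}$, with the $k\notin\mathcal{K}_D$ case handled separately) and making explicit that $\bm{\lambda}^k\in\mathbb{R}^m_+$ so it is a legitimate comparison point for the projection inequality. I would state the nonexpansiveness/obtuse-angle property of the projection onto $\mathbb{R}^m_+$ by reference to the projection facts recalled in the Notations section, then finish with the two-line algebraic rearrangement above.
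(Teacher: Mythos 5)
Your proposal is correct and follows essentially the same route as the paper: the case split on $k\in\mathcal{K}_D$ versus $k\notin\mathcal{K}_D$, the variational (obtuse-angle) characterization of the projection onto $\mathbb{R}^m_+$ applied with test point $\bm{\lambda}^k$, and the same two-line rearrangement. The paper simply cites the projection theorem with the inequality written in the opposite sign convention, so the arguments are identical in substance.
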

\begin{proof}
If $k\in\mathcal{K}_D$, by $\bm{\lambda}^{k+1} = \left[ \bm{\lambda}^k + \alpha \bm{\nu}^k \right]^+$ and the projection theorem \cite[Prop. 2.1.3]{bertsekas1997nonlinear}, we have
\begin{align*}
\left< \bm{\lambda}^{k+1} - \bm{\lambda}^k - \alpha \bm{\nu}^k, \bm{\lambda} - \bm{\lambda}^{k+1} \right> \ge 0, ~~ \forall \bm{\lambda} \ge 0
\end{align*}

By replacing $\bm{\lambda} = \bm{\lambda}^k$ and recalling $\bm{\sigma}^k = \bm{\lambda}^{k+1} - \bm{\lambda}^k$, we obtain \eqref{Lemma3} directly. 

If $k\notin\mathcal{K}_D$, this lemma holds trivially since $\bm{\lambda}^{k+1} = \bm{\lambda}^k$. This completes the proof.
\end{proof}

\begin{lemma}
\label{thm:lem5} Suppose Assumptions A1-A3 hold. In Algorithm 2, we have for $\forall k\in \mathcal{K}$
\begin{align} \label{Lemma4}
\left\| \nabla D\left(\bm{\lambda}^k\right) - \bm{\nu}^k \right\| \le L_D \sum_{\kappa=k-2k_0}^{k-1} \left\| \bm{\sigma}^\kappa \right\| + \sqrt{2L_D\varepsilon_D}
\end{align}
\end{lemma}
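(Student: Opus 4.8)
The plan is to split the gradient error $\nabla D(\bm{\lambda}^k) - \bm{\nu}^k$ into an \emph{asynchrony} part and an \emph{inexactness} part and bound each separately. From \eqref{gradient}, \eqref{nu_def} and \eqref{asyn_inext_defin},
$\nabla D(\bm{\lambda}^k) - \bm{\nu}^k = A\bm{x}(\bm{\lambda}^k) - A\widehat{\bm{x}}^k = \sum_{i\in\mathcal{N}} A_i\left( \bm{x}_i(\bm{\lambda}^k) - \widetilde{\bm{x}}_i(\bm{\lambda}^{k-\delta_i^k}) \right)$. First I would insert $\pm\, \bm{x}_i(\bm{\lambda}^{k-\delta_i^k})$ in each summand, writing it as $A_i\left(\bm{x}_i(\bm{\lambda}^k) - \bm{x}_i(\bm{\lambda}^{k-\delta_i^k})\right) + A_i\left(\bm{x}_i(\bm{\lambda}^{k-\delta_i^k}) - \widetilde{\bm{x}}_i(\bm{\lambda}^{k-\delta_i^k})\right)$, and apply the triangle inequality to get a bound $T_1 + T_2$ with the obvious meanings.

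For $T_1$ (asynchrony), I would use $\|A_i\| \le \|A_i\|_F$ together with the $\|A_i\|_F/c_i$-Lipschitz continuity of $\bm{x}_i(\cdot)$ from Lemma \ref{thm:lem1} to obtain $\left\| A_i\left(\bm{x}_i(\bm{\lambda}^k) - \bm{x}_i(\bm{\lambda}^{k-\delta_i^k})\right) \right\| \le L_i \left\| \bm{\lambda}^k - \bm{\lambda}^{k-\delta_i^k} \right\|$ with $L_i = \|A_i\|_F^2/c_i$. Writing $\bm{\lambda}^k - \bm{\lambda}^{k-\delta_i^k}$ as the telescoping sum $\sum_{\kappa=k-\delta_i^k}^{k-1}\bm{\sigma}^\kappa$ and using $0\le\delta_i^k\le 2k_0$ (Assumption A2) gives $\left\| \bm{\lambda}^k - \bm{\lambda}^{k-\delta_i^k} \right\| \le \sum_{\kappa=k-2k_0}^{k-1}\left\| \bm{\sigma}^\kappa \right\|$; summing over $i\in\mathcal{N}$ and invoking $\sum_{i\in\mathcal{N}}L_i \le L_D$ from Corollary \ref{thm:col2} yields $T_1 \le L_D\sum_{\kappa=k-2k_0}^{k-1}\left\| \bm{\sigma}^\kappa \right\|$.

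For $T_2$ (inexactness), I would bundle the summands back into a single matrix--vector product, $\sum_{i\in\mathcal{N}} A_i\bm{y}_i = A\bm{y}$ with $\bm{y} = col\left\{ \bm{x}_i(\bm{\lambda}^{k-\delta_i^k}) - \widetilde{\bm{x}}_i(\bm{\lambda}^{k-\delta_i^k}) \right\}_{i\in\mathcal{N}}$, so that $T_2 \le \|A\|\,\|\bm{y}\| \le \|A\|_F\,\|\bm{y}\|$ and $\|\bm{y}\|^2 = \sum_{i\in\mathcal{N}}\left\| \bm{x}_i(\bm{\lambda}^{k-\delta_i^k}) - \widetilde{\bm{x}}_i(\bm{\lambda}^{k-\delta_i^k}) \right\|^2 \le \sum_{i\in\mathcal{N}} 2\varepsilon_i/c_i$ by the inexactness bound in \eqref{asyn_inext_defin} (equivalently Lemma \ref{thm:lem3}). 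Since $c_i \ge c_F$, this is at most $2\varepsilon_D/c_F$, hence $T_2 \le \|A\|_F\sqrt{2\varepsilon_D/c_F} = \sqrt{2L_D\varepsilon_D}$. Adding the two bounds gives \eqref{Lemma4}. (Alternatively one may keep $T_2$ term-by-term and close it with Cauchy--Schwarz, $\sum_{i\in\mathcal{N}}\sqrt{L_i\varepsilon_i}\le\sqrt{\sum_{i\in\mathcal{N}} L_i}\,\sqrt{\varepsilon_D}\le\sqrt{L_D\varepsilon_D}$; the constant is identical.)

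I expect the delicate points to be bookkeeping rather than substance: confirming that the doubled window $2k_0$ is correct, which stems from the composition $\delta_i^k = \delta_{pi}(k)+\delta_{di}(k-\delta_{pi}(k))$ defined before \eqref{asyn_inext_defin}, and handling the start of the iteration where $k-2k_0$ may be negative — there the telescoping sum should be read as beginning at index $0$, which is harmless since $k-\delta_i^k\ge 0$ always. The only genuinely ``lossy'' steps, and the ones that fix the final constants, are replacing $\sum_{i\in\mathcal{N}} L_i$ by $L_D$ and $\sum_{i\in\mathcal{N}}\varepsilon_i/c_i$ by $\varepsilon_D/c_F$; everything else is the triangle inequality, the Cauchy--Schwarz/operator-norm inequality, and Lemma \ref{thm:lem1}.
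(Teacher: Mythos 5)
Your proposal is correct and follows essentially the same route as the paper: the same splitting of $\nabla D(\bm{\lambda}^k)-\bm{\nu}^k$ into an asynchrony part and an inexactness part by inserting $\pm\,\bm{x}_i\left(\bm{\lambda}^{k-\delta_i^k}\right)$, closed with Lemma \ref{thm:lem1}, Lemma \ref{thm:lem3}, Assumption A2 (window $2k_0$), and the identity $\left\|A\right\|_F\sqrt{2\varepsilon_D/c_F}=\sqrt{2L_D\varepsilon_D}$. The only difference is bookkeeping: the paper keeps the stacked column vectors and factors out $\left\|A\right\|_F$ before aggregating, whereas you bound the asynchrony term agent-by-agent and invoke $\sum_{i\in\mathcal{N}}L_i\le L_D$; both yield identical constants.
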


\begin{proof}
From the definitions of $\nabla D\left(\bm{\lambda}^k\right)$ and $\bm{\nu}^k$, we have
\begin{align*}
&\left\| \nabla D\left(\bm{\lambda}^k\right) - \bm{\nu}^k \right\| \\
\le& \left\|A\right\| \left\| \bm{x}\left(\bm{\lambda}^k\right) - \widehat{\bm{x}}^k \right\| \\
\le& \left\|A\right\|_F \left\| col\left\{ \bm{x}_i\left(\bm{\lambda}^k\right) - \bm{x}_i\left(\bm{\lambda}^{k-\delta_i^k}\right) \right\}_{i\in\mathcal{N}} \right\| \\
& + \left\|A\right\|_F \left\| col\left\{ \bm{x}_i\left(\bm{\lambda}^{k-\delta_i^k}\right) - \widetilde{\bm{x}}_i\left(\bm{\lambda}^{k-\delta_i^k}\right) \right\}_{i\in\mathcal{N}} \right\| \\
=& \left\|A\right\|_F \sqrt{\sum_{i\in\mathcal{N}} \left\| \bm{x}_i\left(\bm{\lambda}^k\right) - \bm{x}_i\left(\bm{\lambda}^{k-\delta_i^k}\right) \right\|^2} \\
& + \left\|A\right\|_F \sqrt{\sum_{i\in\mathcal{N}} \left\|  \bm{x}_i\left(\bm{\lambda}^{k-\delta_i^k}\right) - \widetilde{\bm{x}}_i\left(\bm{\lambda}^{k-\delta_i^k}\right) \right\|^2} \\
\le& \left\|A\right\|_F \sqrt{\sum_{i\in\mathcal{N}} \frac{\left\| A_i \right\|_F^2}{c_i^2} \left\| \bm{\lambda}^k - \bm{\lambda}^{k-\delta_i^k} \right\|^2} + \left\|A\right\|_F \sqrt{\frac{2 \varepsilon_D}{c_F}} \\
\le& \left\|A\right\|_F \sqrt{\sum_{i\in\mathcal{N}} \frac{\left\| A_i \right\|_F^2}{c_F^2} \sum_{\kappa=k-\delta_i^k}^{k-1} \left\| \bm{\sigma}^\kappa \right\|^2} + \sqrt{2L_D\varepsilon_D} \\
\le& \left\|A\right\|_F \sqrt{ \frac{\left\| A \right\|_F^2}{c_F^2} \sum_{\kappa=k-2k_0}^{k-1} \left\| \bm{\sigma}^\kappa \right\|^2} + \sqrt{2L_D\varepsilon_D} \\
\le& L_D \sum_{\kappa=k-2k_0}^{k-1} \left\| \bm{\sigma}^\kappa \right\| + \sqrt{2L_D\varepsilon_D}
\end{align*}
where the second inequality follows from the definition \eqref{asyn_inext_defin}, the triangle inequality and $\left\|A\right\| \le \left\|A\right\|_F$; the third one yields from Lemmas \ref{thm:lem1} and \ref{thm:lem3}; recalling $c_F = \min_{i\in\mathcal{N}} c_i$, $\bm{\sigma}^k = \bm{\lambda}^{k+1} - \bm{\lambda}^k$, and $L_D = \left\| A \right\|_F^2 / c_F$, the fourth holds from the triangle inequality; the fifth one follows from Assumption A2 and $\left\|A\right\|_F = \sum_{i\in\mathcal{N}} \left\|A_i\right\|_F$; the last yields from the definition of $L_D$ and $\sqrt{x+y} \le \sqrt{x} + \sqrt{y}, ~ \forall x,y\ge0$.
\end{proof}

Then we obtain the upper bound of $\sqrt{S^k}$, if the step size is sufficiently small. For simplification, denote $D^0 := D\left(\bm{\lambda}^0\right)$.

\begin{theorem}
\label{thm:thm1} Suppose Assumptions A1-A3 hold. In Algorithm 2, provided that the step size satisfies
\begin{align} \label{Step-Size_1}
0 < \alpha < \frac{1}{\left(2k_0+1/2\right)L_D}
\end{align}
then there is
\begin{align} \label{Theorem_1}
\sqrt{S^k} \le \frac{\sqrt{2L_D\varepsilon_D}}{\gamma_\alpha} \sqrt{k+1} + 2\sqrt{\frac{D^* - D^0}{\gamma_\alpha}}
\end{align}
where $\gamma_\alpha$ is a positive constant parameter defined as
\begin{align*}
\gamma_\alpha: = \frac{1}{\alpha} - \left(2k_0+\frac{1}{2}\right)L_D
\end{align*}
\end{theorem}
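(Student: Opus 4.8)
The plan is to monitor the growth of the dual objective $D(\bm{\lambda}^k)$ along the iterates and exploit that it can never exceed $D^*$. The starting point is a one-step ascent inequality obtained from the second-order bound \eqref{Corollary1_3} in Corollary \ref{thm:col2}: expanding $D(\bm{\lambda}^{k+1})$ to second order along the segment from $\bm{\lambda}^k$ to $\bm{\lambda}^{k+1}$ and applying \eqref{Corollary1_3} to the Hessian term gives, for every $k\in\mathcal{K}$,
\begin{align*}
D(\bm{\lambda}^{k+1}) - D(\bm{\lambda}^k) \ge \left\langle \nabla D(\bm{\lambda}^k), \bm{\sigma}^k \right\rangle - \tfrac{L_D}{2}\left\| \bm{\sigma}^k \right\|^2 ,
\end{align*}
which holds trivially with both sides zero when $k\notin\mathcal{K}_D$, so it may be summed over all $k$.

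Next I would split $\nabla D(\bm{\lambda}^k) = \bm{\nu}^k + \bigl(\nabla D(\bm{\lambda}^k) - \bm{\nu}^k\bigr)$. Lemma \ref{thm:lem4} controls the first piece via $\langle \bm{\nu}^k, \bm{\sigma}^k\rangle \ge \|\bm{\sigma}^k\|^2/\alpha$, while Cauchy--Schwarz followed by the gradient-mismatch estimate of Lemma \ref{thm:lem5} lower-bounds the second piece by $-\bigl(L_D\sum_{\kappa=k-2k_0}^{k-1}\|\bm{\sigma}^\kappa\| + \sqrt{2L_D\varepsilon_D}\bigr)\|\bm{\sigma}^k\|$. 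Applying the elementary bound $\|\bm{\sigma}^k\|\,\|\bm{\sigma}^\kappa\| \le \tfrac12(\|\bm{\sigma}^k\|^2 + \|\bm{\sigma}^\kappa\|^2)$ to each of the $2k_0$ products in the delay window then produces a per-iteration estimate of the form
\begin{align*}
D(\bm{\lambda}^{k+1}) - D(\bm{\lambda}^k) \ge \left(\tfrac{1}{\alpha} - \tfrac{L_D}{2} - k_0 L_D\right)\left\| \bm{\sigma}^k \right\|^2 - \tfrac{L_D}{2}\sum_{\kappa=k-2k_0}^{k-1}\left\| \bm{\sigma}^\kappa \right\|^2 - \sqrt{2L_D\varepsilon_D}\,\left\| \bm{\sigma}^k \right\| .
\end{align*}

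Finally I would sum this over $k=0,\dots,K$. The left side telescopes to $D(\bm{\lambda}^{K+1}) - D^0 \le D^* - D^0$; on the right side the double sum is re-indexed so each $\|\bm{\sigma}^\kappa\|^2$ is counted at most $2k_0$ times, contributing at most $k_0 L_D S^K$, so the total coefficient of $S^K$ collapses exactly to $\gamma_\alpha = \tfrac1\alpha - (2k_0+\tfrac12)L_D$, which is strictly positive precisely by the step-size condition \eqref{Step-Size_1}. Bounding $\sum_{k=0}^K\|\bm{\sigma}^k\| \le \sqrt{(K+1)S^K}$ by Cauchy--Schwarz turns the accumulated inequality into a quadratic inequality $\gamma_\alpha S^K - \sqrt{2L_D\varepsilon_D}\sqrt{K+1}\,\sqrt{S^K} - (D^*-D^0) \le 0$ in the unknown $\sqrt{S^K}$; solving it and using $\sqrt{a+b}\le\sqrt{a}+\sqrt{b}$ yields a bound of the form $\sqrt{S^K} \le \frac{\sqrt{2L_D\varepsilon_D}}{\gamma_\alpha}\sqrt{K+1} + c\sqrt{(D^*-D^0)/\gamma_\alpha}$ with an absolute constant $c$, which is \eqref{Theorem_1}.

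The main obstacle is the bookkeeping of the asynchronous cross-terms: because $\bm{\nu}^k$ is built from delayed primal iterates whose underlying multipliers lag by up to $2k_0$ steps, Lemma \ref{thm:lem5} introduces a whole window of past deviations into the estimate, and the AM--GM splitting must be coordinated with the subsequent double-sum re-indexing so that the accumulated penalty is absorbed into $S^K$ with exactly the factor $(2k_0+\tfrac12)L_D$ --- any looser accounting would fail to match the stated step-size threshold and would leave $\gamma_\alpha$ possibly non-positive. A minor but necessary point is that $D(\bm{\lambda}^{K+1})\le D^*$ gives $D^*-D^0\ge 0$, so the constant term in the quadratic inequality has the correct sign and the square-root split at the end is legitimate.
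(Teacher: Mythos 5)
Your proposal is correct and follows essentially the same route as the paper's proof: the second-order expansion bounded via \eqref{Corollary1_3}, the split of $\nabla D(\bm{\lambda}^k)$ into $\bm{\nu}^k$ plus the mismatch handled by Lemmas \ref{thm:lem4} and \ref{thm:lem5}, the AM--GM treatment of the delayed cross-terms, the re-indexed double sum absorbing into $\gamma_\alpha S^k$, and the final quadratic inequality in $\sqrt{S^k}$. Your bookkeeping (counting each $\|\bm{\sigma}^\kappa\|^2$ at most $2k_0$ times after keeping the $\tfrac{L_D}{2}\|\bm{\sigma}^k\|^2$ term separate) reproduces exactly the paper's coefficient $(2k_0+\tfrac12)L_D$, and the resulting bound matches \eqref{Theorem_1}.
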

\begin{proof}
Note that $\bm{\lambda}^{k+1}=\bm{\lambda}^k+\bm{\sigma}^k $. Considering the second-order Taylor expansion, there exists $\bm{\varphi} \in \mathcal{R}^m_+$ such that 
\begin{align*}
& D\left(\bm{\lambda}^k\right) - D\left(\bm{\lambda}^{k+1}\right) \\
= & -\left<\nabla D\left(\bm{\lambda}^k\right), \bm{\sigma}^k \right> - \frac{1}{2} \left< \bm{\sigma}^k, \nabla^2 D\left(\bm{\varphi}\right) \bm{\sigma}^k \right> \\
\stackrel{\eqref{Corollary1_3}}{\le} & \left< \bm{\nu}^k - \nabla D\left(\bm{\lambda}^k\right), \bm{\sigma}^k \right> - \left< \bm{\nu}^k, \bm{\sigma}^k \right> + \frac{L_D}{2} \left\|\bm{\sigma}^k\right\|^2 \\
\stackrel{\eqref{Lemma3}}{\le} & \left\| \bm{\nu}^k - \nabla D\left(\bm{\lambda}^k\right) \right\| \left\| \bm{\sigma}^k \right\| + \left( \frac{L_D}{2}-\frac{1}{\alpha} \right) \left\| \bm{\sigma}^k \right\|^2 \\
\stackrel{\eqref{Lemma4}}{\le} & L_D \sum_{\kappa=k-2k_0}^{k-1} \big\| \bm{\sigma}^\kappa \big\| \left\| \bm{\sigma}^k \right\| + \sqrt{2L_D\varepsilon_D} \left\| \bm{\sigma}^k \right\| \\
& + \left( \frac{L_D}{2}-\frac{1}{\alpha} \right) \left\| \bm{\sigma}^k \right\|^2 \\
\le & \frac{L_D}{2} \sum_{\kappa=k-2k_0}^{k-1} \left\{ \big\| \bm{\sigma}^\kappa \big\|^2 + \left\| \bm{\sigma}^k \right\|^2 \right\}  + \sqrt{2L_D\varepsilon_D} \left\| \bm{\sigma}^k \right\| \\
& + \left( \frac{L_D}{2}-\frac{1}{\alpha} \right) \left\| \bm{\sigma}^k \right\|^2 \\
= & \frac{L_D}{2} \sum_{\kappa=k-2k_0}^{k} \big\| \bm{\sigma}^\kappa \big\|^2 + \sqrt{2L_D\varepsilon_D} \left\| \bm{\sigma}^k \right\| \\
& + \left( k_0L_D - \frac{1}{\alpha} \right) \left\| \bm{\sigma}^k \right\|^2 
\end{align*}
where the last inequality holds from the mean value inequality.

Summing over $k$, we have
\begin{align*}
& D^0 - D\left(\bm{\lambda}^{k+1}\right) \\
\le & \frac{L_D}{2} \sum_{\kappa=0}^{k} \sum_{k'=\kappa-2k_0}^{\kappa} \left\| \bm{\sigma}^{k'} \right\|^2 + \sqrt{2L_D\varepsilon_D} \sum_{\kappa=0}^{k} \left\| \bm{\sigma}^\kappa \right\| \\
& + \left( k_0L_D - \frac{1}{\alpha} \right) S^k \\
\le & \left( \left(2k_0+\frac{1}{2}\right)L_D - \frac{1}{\alpha} \right) S^k + \sqrt{2\left(k+1\right) L_D\varepsilon_D} \sqrt{S^k}
\end{align*}
where the second inequality yields from $\frac{a+b}{2} \le \sqrt{\frac{a^2+b^2}{2}}$.

Then we have
\begin{align*}
\gamma_\alpha S^k - \sqrt{2\left(k+1\right) L_D\varepsilon_D} \sqrt{S^k} \le D\left(\bm{\lambda}^{k+1}\right) - D^0
\end{align*}

Assume the step-size $\alpha$ satisfies \eqref{Step-Size_1}. Noting the optimality of $D^*$, we obtain the result by solving the above quadratic inequality and using $\sqrt{x+y} \le \sqrt{x} + \sqrt{y}, ~ \forall x,y\ge0$. 
\end{proof}

\begin{remark}
\textit{(Interaction between Asynchrony and Inexactness)} $\sqrt{S^k}$ characterizes the interaction between asynchrony and inexactness in the DD-DO algorithm. If the solutions to subproblems are exact, i.e., $\varepsilon_D = 0$, the $\mathcal{O} (\sqrt{k})$ term in \eqref{Theorem_1} vanishes. In other words, $\sqrt{S^k}$ is not greater than a positive constant and hence $\lim_{k\to\infty} \left\| \bm{\sigma}^k \right\| = 0$, which implies the convergence of $\left\{ \bm{\lambda}^k \right\}$ in the asynchronous DD-DO algorithm,  as proved in \cite{Steven}. However, if the solutions to subproblems are inexact, i.e., $\varepsilon_D > 0$, the errors will be  accumulated, leading to the increasing of $\sqrt{S^k}$ in $\mathcal{O} (\sqrt{k})$, and hence $\left\{ \bm{\lambda}^k \right\}$ fails to converge. Simulation results in Section V verify this theorem.
\end{remark}

\subsection{Bound of the Norm of Dual Variable}
Here we show that in Algorithm \ref{AsyncAlgo}, the norm of dual variable $\left\| \bm{\lambda}^k \right\|$ increases not faster than $\mathcal{O} (\sqrt{k})$. We start the analysis from the following lemma.

\begin{lemma}
\label{thm:lem7} Suppose Assumptions A1 and A3 hold. In Algorithm 2, we have for $\forall \bm{\lambda},\bm{\mu} \in \mathbb{R}^m_+$
\begin{align*}
0 \le \widetilde{D}_i\left(\bm{\lambda}\right) - D_i \left(\bm{\mu}\right) + \left< \bm{\mu} - \bm{\lambda}, A_i \widetilde{\bm{x}}_i \left( \bm{\lambda} \right) \right> \le L_i \left\| \bm{\mu} - \bm{\lambda} \right\|^2 + 2\varepsilon_i
\end{align*}
\end{lemma}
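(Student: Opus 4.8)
The plan is to recognize that the middle expression is the gap between a primal objective value evaluated at the inexact minimizer and the exact dual value. Using the definition \eqref{D_tilde} of $\widetilde{D}_i$ and the fact that $\mathcal{L}_i(\bm{x}_i;\cdot)$ is affine in the multiplier, one has
\begin{align*}
\widetilde{D}_i(\bm{\lambda}) + \left< \bm{\mu}-\bm{\lambda},\, A_i\widetilde{\bm{x}}_i(\bm{\lambda}) \right>
= f_i\big(\widetilde{\bm{x}}_i(\bm{\lambda})\big) + \left< \bm{\mu},\, A_i\widetilde{\bm{x}}_i(\bm{\lambda}) \right>
= \mathcal{L}_i\big(\widetilde{\bm{x}}_i(\bm{\lambda});\bm{\mu}\big),
\end{align*}
so the quantity to be bounded is exactly $\mathcal{L}_i(\widetilde{\bm{x}}_i(\bm{\lambda});\bm{\mu}) - D_i(\bm{\mu})$. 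The lower bound is then immediate: by Assumption A3 the inexact point satisfies $\widetilde{\bm{x}}_i(\bm{\lambda}) \in \mathcal{X}_i$, and since $D_i(\bm{\mu})$ is the minimum of $\mathcal{L}_i(\cdot;\bm{\mu})$ over $\mathcal{X}_i$, we get $\mathcal{L}_i(\widetilde{\bm{x}}_i(\bm{\lambda});\bm{\mu}) \ge D_i(\bm{\mu})$.

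For the upper bound I would lower-bound $D_i(\bm{\mu})$ by its linearization at $\bm{\lambda}$. Since $\nabla D_i$ is $L_i$-Lipschitz continuous by Corollary \ref{thm:col2} (alternatively one may argue by a second-order Taylor expansion as in the proof of Theorem \ref{thm:thm1}), we have $D_i(\bm{\mu}) \ge D_i(\bm{\lambda}) + \left< \nabla D_i(\bm{\lambda}), \bm{\mu}-\bm{\lambda} \right> - \frac{L_i}{2}\|\bm{\mu}-\bm{\lambda}\|^2$, with $\nabla D_i(\bm{\lambda}) = A_i\bm{x}_i(\bm{\lambda})$ by \eqref{gradient_i}. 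Subtracting this from $\mathcal{L}_i(\widetilde{\bm{x}}_i(\bm{\lambda});\bm{\mu}) = \widetilde{D}_i(\bm{\lambda}) + \left< \bm{\mu}-\bm{\lambda}, A_i\widetilde{\bm{x}}_i(\bm{\lambda}) \right>$, a direct computation shows the difference equals $\big(\widetilde{D}_i(\bm{\lambda}) - D_i(\bm{\lambda})\big) + \left< A_i(\widetilde{\bm{x}}_i(\bm{\lambda}) - \bm{x}_i(\bm{\lambda})),\, \bm{\mu}-\bm{\lambda} \right> + \frac{L_i}{2}\|\bm{\mu}-\bm{\lambda}\|^2$; the first term is at most $\varepsilon_i$ by Assumption A3 and the third is already in the desired form, so only the cross term remains.

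To control the cross term I would apply Cauchy--Schwarz together with $\|A_i\| \le \|A_i\|_F$ and Lemma \ref{thm:lem3}, which bound it by $\|A_i\|_F\sqrt{2\varepsilon_i/c_i}\,\|\bm{\mu}-\bm{\lambda}\|$, and then Young's inequality $ab \le \frac{L_i}{2}a^2 + \frac{1}{2L_i}b^2$ with $a = \|\bm{\mu}-\bm{\lambda}\|$ and $b^2 = \|A_i\|_F^2\cdot\frac{2\varepsilon_i}{c_i} = 2L_i\varepsilon_i$, which bounds it by $\frac{L_i}{2}\|\bm{\mu}-\bm{\lambda}\|^2 + \varepsilon_i$. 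Summing the three contributions yields $L_i\|\bm{\mu}-\bm{\lambda}\|^2 + 2\varepsilon_i$, completing the proof. There is no real obstacle here; the only point requiring a little care is calibrating the weight in Young's inequality so that the $\|\bm{\mu}-\bm{\lambda}\|^2$-coefficients combine to exactly $L_i$ and the two $\varepsilon_i$-contributions add up to $2\varepsilon_i$ — the identity $L_i = \|A_i\|_F^2/c_i$ makes this bookkeeping work out cleanly.
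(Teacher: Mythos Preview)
Your proposal is correct and follows essentially the same route as the paper: the lower bound via optimality of $D_i(\bm{\mu})$ at a feasible point, and the upper bound via the Lipschitz (descent) inequality for $D_i$ at $\bm{\lambda}$, Assumption~A3 for the $\widetilde{D}_i-D_i$ term, and Cauchy--Schwarz plus a weighted Young/AM--GM inequality (with weight calibrated by $L_i=\|A_i\|_F^2/c_i$) for the cross term. The only cosmetic difference is that you first invoke Lemma~\ref{thm:lem3} and then apply Young's inequality, whereas the paper applies the mean-value inequality first and then Lemma~\ref{thm:lem3}; the resulting bounds are identical.
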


\begin{proof}
For the left-hand side inequality, from the optimality of $\bm{x}_i \left(\cdot\right)$, we have
\begin{align*}
D_i \left(\bm{\mu}\right) =& \min_{\bm{x}_i \in \mathcal{X}_i} ~ f_i\left(\bm{x}_i\right) + \left< \bm{\mu}, A_i\bm{x}_i \right> \\
=& f_i \left( \bm{x}_i \left( \bm{\mu} \right) \right) + \left< \bm{\mu}, A_i \bm{x}_i \left( \bm{\mu} \right) \right> \\
\le& f_i \left( \widetilde{\bm{x}}_i \left( \bm{\lambda} \right) \right) + \left< \bm{\mu}, A_i \widetilde{\bm{x}}_i \left( \bm{\lambda} \right) \right> \\
=& \widetilde{D}_i \left(\bm{\lambda}\right) + \left< \bm{\mu} - \bm{\lambda}, A_i \widetilde{\bm{x}}_i \left( \bm{\lambda} \right) \right>
\end{align*}

For the right-hand side inequality, by the Lipschitz continuity of $\nabla D_i \left(\cdot\right)$, we have
\begin{align*}
D_i \left(\bm{\mu}\right) \ge& D_i \left(\bm{\lambda}\right) + \left< \bm{\mu} - \bm{\lambda}, A_i \bm{x}_i \left( \bm{\lambda} \right) \right> - \frac{L_i}{2} \left\| \bm{\mu} - \bm{\lambda} \right\|^2 \\
=& \widetilde{D}_i \left(\bm{\lambda}\right) + \left< \bm{\mu} - \bm{\lambda}, A_i \widetilde{\bm{x}}_i \left( \bm{\lambda} \right) \right> - \frac{L_i}{2} \left\| \bm{\mu} - \bm{\lambda} \right\|^2 \\
&+ \left< \bm{\mu} - \bm{\lambda}, A_i \left( \bm{x}_i \left( \bm{\lambda} \right) - \widetilde{\bm{x}}_i \left( \bm{\lambda} \right) \right) \right> + D_i\left(\bm{\lambda}\right) - \widetilde{D}_i\left(\bm{\lambda}\right)\\
\ge& \widetilde{D}_i \left(\bm{\lambda}\right) + \left< \bm{\mu} - \bm{\lambda}, A_i \widetilde{\bm{x}}_i \left( \bm{\lambda} \right) \right> - \frac{L_i}{2} \left\| \bm{\mu} - \bm{\lambda} \right\|^2 \\
&- \left\| A_i \right\|_F \left\| \bm{\mu} - \bm{\lambda} \right\| \left\| \bm{x}_i \left( \bm{\lambda} \right) - \widetilde{\bm{x}}_i \left( \bm{\lambda} \right) \right\| - \varepsilon_i\\
\ge& \widetilde{D}_i \left(\bm{\lambda}\right) + \left< \bm{\mu} - \bm{\lambda}, A_i \widetilde{\bm{x}}_i \left( \bm{\lambda} \right) \right> - L_i \left\| \bm{\mu} - \bm{\lambda} \right\|^2 \\
&- \frac{c_i}{2} \left\| \bm{x}_i \left( \bm{\lambda} \right) - \widetilde{\bm{x}}_i \left( \bm{\lambda} \right) \right\|^2  - \varepsilon_i \\
\ge& \widetilde{D}_i \left(\bm{\lambda}\right) + \left< \bm{\mu} - \bm{\lambda}, A_i \widetilde{\bm{x}}_i \left( \bm{\lambda} \right) \right> - L_i \left\| \bm{\mu} - \bm{\lambda} \right\|^2 - 2\varepsilon_i
\end{align*}
where the second inequality yields from the Cauchy$-$Schwarz inequality, $\left\| A_i \right\| \le \left\| A_i \right\|_F$ and Assumption A3; the third one follows from the mean value inequality as
\begin{align*}
& \left\| A_i \right\|_F \left\| \bm{\mu} - \bm{\lambda} \right\| \left\| \bm{x}_i \left( \bm{\lambda} \right) - \widetilde{\bm{x}}_i \left( \bm{\lambda} \right) \right\| \\
= & \frac{ \left\| A_i \right\|_F }{ \sqrt{c_i} } \left\| \bm{\mu} - \bm{\lambda} \right\| \cdot \sqrt{c_i} \left\| \bm{x}_i \left( \bm{\lambda} \right) - \widetilde{\bm{x}}_i \left( \bm{\lambda} \right) \right\| \\
\le & \frac{ \left\| A_i \right\|_F^2 }{ 2 c_i } \left\| \bm{\mu} - \bm{\lambda} \right\|^2 + \frac{c_i}{2} \left\| \bm{x}_i \left( \bm{\lambda} \right) - \widetilde{\bm{x}}_i \left( \bm{\lambda} \right) \right\|^2 \\
= & \frac{L_i}{2} \left\| \bm{\mu} - \bm{\lambda} \right\|^2 + \frac{c_i}{2} \left\| \bm{x}_i \left( \bm{\lambda} \right) - \widetilde{\bm{x}}_i \left( \bm{\lambda} \right) \right\|^2
\end{align*}
and the last inequality holds from Lemma \ref{thm:lem3}.
\end{proof}

Then we have the following theorem of bounded dual variables.


\begin{theorem} \label{thm:thm2}
Suppose Assumptions A1-A3 hold. In Algorithm 2, if the step size satisfies
\begin{align} \label{Step_Size_2}
0 \le \alpha \le \frac{1}{2L_D}
\end{align}
then the dual variable $\bm{\lambda}^{k+1}$ is bounded by
\begin{align*}
\left\| \bm{\lambda}^{k+1} \right\| \le 2\left\| \bm{\lambda}^* \right\| + \left\| \bm{\lambda}^0 \right\| + 2\sqrt{\alpha \varepsilon_D} \sqrt{k+1} + \sqrt{2k_0} \sqrt{S^k}
\end{align*}
\end{theorem}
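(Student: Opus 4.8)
\emph{Proof plan.} The strategy is a Fej\'er-type (energy) argument for $\|\bm{\lambda}^{k+1}-\bm{\lambda}^*\|$, from which the claim follows via the triangle inequalities $\|\bm{\lambda}^{k+1}\|\le\|\bm{\lambda}^{k+1}-\bm{\lambda}^*\|+\|\bm{\lambda}^*\|$ and $\|\bm{\lambda}^0-\bm{\lambda}^*\|\le\|\bm{\lambda}^0\|+\|\bm{\lambda}^*\|$. Iterations with $k\notin\mathcal{K}_D$ leave $\bm{\lambda}$ (and $\bm{\sigma}^k$) unchanged, so only $k\in\mathcal{K}_D$ needs to be treated. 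For such $k$, using $\bm{\lambda}^{k+1}=\left[\bm{\lambda}^k+\alpha\bm{\nu}^k\right]^+$, $\bm{\lambda}^*\ge 0$, and the projection property $\|\left[\bm{z}\right]^+-\bm{w}\|^2\le\|\bm{z}-\bm{w}\|^2-\|\left[\bm{z}\right]^+-\bm{z}\|^2$ for $\bm{w}\ge 0$, I obtain
\[
\|\bm{\lambda}^{k+1}-\bm{\lambda}^*\|^2\le\|\bm{\lambda}^k+\alpha\bm{\nu}^k-\bm{\lambda}^*\|^2-\|\bm{\sigma}^k-\alpha\bm{\nu}^k\|^2=\|\bm{\lambda}^k-\bm{\lambda}^*\|^2+2\alpha\left<\bm{\nu}^k,\bm{\lambda}^{k+1}-\bm{\lambda}^*\right>-\|\bm{\sigma}^k\|^2,
\]
where the last equality merely expands both squares and collects $\left<\bm{\nu}^k,\bm{\sigma}^k\right>$ into the cross term (using $\bm{\lambda}^k+\bm{\sigma}^k=\bm{\lambda}^{k+1}$).

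The crux is an upper bound on $\left<\bm{\nu}^k,\bm{\lambda}^{k+1}-\bm{\lambda}^*\right>$. With $\bm{\nu}^k=\sum_{i\in\mathcal{N}}A_i\widetilde{\bm{x}}_i(\bm{\lambda}^{k-\delta_i^k})-\bm{b}$ from \eqref{nu_def} and \eqref{asyn_inext_defin}, I split for each $i$ the vector $\bm{\lambda}^{k+1}-\bm{\lambda}^*=(\bm{\lambda}^{k+1}-\bm{\lambda}^{k-\delta_i^k})+(\bm{\lambda}^{k-\delta_i^k}-\bm{\lambda}^*)$. To the second piece I apply the \emph{left} inequality of Lemma~\ref{thm:lem7} with $\bm{\lambda}=\bm{\lambda}^{k-\delta_i^k}$, $\bm{\mu}=\bm{\lambda}^*$, giving $\left<A_i\widetilde{\bm{x}}_i(\bm{\lambda}^{k-\delta_i^k}),\bm{\lambda}^{k-\delta_i^k}-\bm{\lambda}^*\right>\le\widetilde{D}_i(\bm{\lambda}^{k-\delta_i^k})-D_i(\bm{\lambda}^*)$; to the first (delay) piece I apply the \emph{right} inequality of Lemma~\ref{thm:lem7} with $\bm{\mu}=\bm{\lambda}^{k+1}$, trading it for $D_i(\bm{\lambda}^{k+1})-\widetilde{D}_i(\bm{\lambda}^{k-\delta_i^k})+L_i\|\bm{\lambda}^{k+1}-\bm{\lambda}^{k-\delta_i^k}\|^2+2\varepsilon_i$. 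Summing over $i$, the inexact values $\widetilde{D}_i$ cancel, $\sum_i D_i(\cdot)$ reassembles into $D(\cdot)+\left<\cdot,\bm{b}\right>$ so the $\bm{b}$-terms cancel against $-\left<\bm{b},\bm{\lambda}^{k+1}-\bm{\lambda}^*\right>$, and $\sum_i\varepsilon_i=\varepsilon_D$; the surviving function-value surplus is $D(\bm{\lambda}^{k+1})-D^*\le 0$ by optimality of $\bm{\lambda}^*$. Hence
\[
\left<\bm{\nu}^k,\bm{\lambda}^{k+1}-\bm{\lambda}^*\right>\le 2\varepsilon_D+\sum_{i\in\mathcal{N}}L_i\,\big\|\bm{\lambda}^{k+1}-\bm{\lambda}^{k-\delta_i^k}\big\|^2 .
\]

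Substituting this into the energy recursion, I bound each $\|\bm{\lambda}^{k+1}-\bm{\lambda}^{k-\delta_i^k}\|$ by the triangle inequality over the dual deviations $\bm{\sigma}^\kappa$ it spans (Assumption~A2 gives $\delta_i^k\le 2k_0$), use $\sum_i L_i\le L_D$ from Corollary~\ref{thm:col2}, and invoke the step size $\alpha\le 1/(2L_D)$ so that, after summing the recursion over $\kappa=0,\dots,k$ (only $\kappa\in\mathcal{K}_D$ contribute), the $-\|\bm{\sigma}^\kappa\|^2$ terms absorb the ``diagonal'' contribution while, by a window-overlap count, each remaining $\|\bm{\sigma}^\kappa\|^2$ is charged to the later delay windows it belongs to; the inexactness terms contribute $\sum_{\kappa=0}^{k}2\alpha\cdot 2\varepsilon_D=4\alpha\varepsilon_D(k+1)$. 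This yields
\[
\|\bm{\lambda}^{k+1}-\bm{\lambda}^*\|^2\le\|\bm{\lambda}^0-\bm{\lambda}^*\|^2+4\alpha\varepsilon_D(k+1)+2k_0\,S^k ,
\]
and taking square roots with $\sqrt{x+y+z}\le\sqrt{x}+\sqrt{y}+\sqrt{z}$, then applying the two triangle inequalities mentioned at the outset, delivers the stated bound.

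The main obstacle is the cross-term estimate: one must orchestrate \emph{both} directions of Lemma~\ref{thm:lem7} so that, after summation over $i$, the $\bm{b}$-terms and the inexact dual values $\widetilde{D}_i$ cancel and only the non-positive dual gap $D(\bm{\lambda}^{k+1})-D^*$, the Lipschitz-weighted dual drift $\sum_i L_i\|\bm{\lambda}^{k+1}-\bm{\lambda}^{k-\delta_i^k}\|^2$, and $2\varepsilon_D$ survive. The secondary technical point is the telescoping/window bookkeeping that controls the coefficient of $S^k$; this is precisely where the milder step-size requirement $\alpha\le 1/(2L_D)$ (weaker than the one in Theorem~\ref{thm:thm1}) is exactly what is needed.
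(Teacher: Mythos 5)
Your route is essentially the paper's own: a projection-based energy recursion for $\left\| \bm{\lambda}^{k+1}-\bm{\lambda}^* \right\|^2$, the two-sided use of Lemma~\ref{thm:lem7} pivoted at the delayed multiplier $\bm{\lambda}^{k-\delta_i^k}$ so that the $\widetilde{D}_i$ and $\bm{b}$ terms cancel and only $D(\bm{\lambda}^{k+1})-D^*\le 0$, the $2\varepsilon_D$ term, and Lipschitz-weighted drift terms survive, followed by telescoping over $k$ and extraction of $\left\|\bm{\lambda}^{k+1}\right\|$. Your nonexpansive-projection identity and the final triangle inequalities are cosmetic variants of the paper's variational inequality \eqref{Proj_Ineq} and its quadratic-inequality step, and they deliver the same constants, so those parts are fine.

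The substantive gap is the bookkeeping that is supposed to produce the coefficient $2k_0$ of $S^k$. Your per-step surplus is $2\alpha\sum_{i} L_i\left\|\bm{\lambda}^{k+1}-\bm{\lambda}^{k-\delta_i^k}\right\|^2-\left\|\bm{\sigma}^k\right\|^2$, and the vector $\bm{\lambda}^{k+1}-\bm{\lambda}^{k-\delta_i^k}$ spans up to $2k_0+1$ deviations \emph{including} $\bm{\sigma}^k$ itself. An honest Cauchy--Schwarz over that window gives $\left\|\bm{\lambda}^{k+1}-\bm{\lambda}^{k-\delta_i^k}\right\|^2\le(2k_0+1)\sum_{\kappa=k-2k_0}^{k}\left\|\bm{\sigma}^\kappa\right\|^2$, so even with $2\alpha L_D\le1$ and $\sum_i L_i\le L_D$ the diagonal term enters with weight $2k_0+1$ while the single $-\left\|\bm{\sigma}^k\right\|^2$ absorbs only one unit of it; summing and counting overlaps then yields a coefficient of order $4k_0(k_0+1)$ on $S^k$, i.e.\ a term $2\sqrt{k_0(k_0+1)}\sqrt{S^k}$ rather than the stated $\sqrt{2k_0}\sqrt{S^k}$ --- the claim that ``the $-\left\|\bm{\sigma}^\kappa\right\|^2$ terms absorb the diagonal'' is precisely the step that fails for $k_0\ge1$. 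The paper avoids ever putting $\bm{\sigma}^k$ into the window: it spends the step size early to keep a per-agent $-2\alpha L_i\left\|\bm{\sigma}^k\right\|^2$ in \eqref{Theorem_Ineq_1}, consumes the $-L_i\left\|\bm{\lambda}^{k+1}-\bm{\lambda}^{k-\delta_i^k}\right\|^2$ penalty inside Lemma~\ref{thm:lem7} via $(\Delta_4)$, and is left with only the past window $\kappa\in[k-2k_0,k-1]$, which is what produces $2k_0S^k$ (granting the paper's own rather loose squared-norm ``triangle inequality'' manipulations in \eqref{Theorem_Ineq_2}). So your sketch establishes the right structure and the right orders in $k$ and $S^k$, but as written it does not substantiate the specific constant $\sqrt{2k_0}$ in the statement; to get it you must reorganize the absorption of the current deviation as the paper does, or accept a larger $k_0$-dependent constant.
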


\begin{proof}
For $k\in \mathcal{K}_D$, applying the projection theorem \cite[Prop. 2.1.3]{bertsekas1997nonlinear}, we have
\begin{align} \label{Proj_Ineq}
\left< \bm{\lambda}^{k+1} - \bm{\lambda}^k - \alpha \bm{\nu}^k, \bm{\lambda} - \bm{\lambda}^{k+1} \right> \ge 0, ~~ \forall \bm{\lambda} \ge 0
\end{align}
and hence
\begin{align}
& \left\| \bm{\lambda}^{k+1} - \bm{\lambda} \right\|^2 \notag \\
= & \left\| \bm{\lambda}^{k+1} - \bm{\lambda}^k + \bm{\lambda}^k - \bm{\lambda} \right\|^2 \notag \\
= & \left\| \bm{\lambda}^k - \bm{\lambda} \right\|^2 + \left\| \bm{\lambda}^{k+1} - \bm{\lambda}^k \right\|^2 + 2\left< \bm{\lambda}^{k+1} - \bm{\lambda}^k, \bm{\lambda}^k - \bm{\lambda} \right> \notag \\
= & \left\| \bm{\lambda}^k - \bm{\lambda} \right\|^2 - \left\| \bm{\lambda}^{k+1} - \bm{\lambda}^k \right\|^2 + 2\left< \bm{\lambda}^{k+1} - \bm{\lambda}^k, \bm{\lambda}^{k+1} - \bm{\lambda} \right> \notag \\
\le & \left\| \bm{\lambda}^k - \bm{\lambda} \right\|^2 - 2\alpha L_D \left\| \bm{\lambda}^{k+1} - \bm{\lambda}^k \right\|^2 + 2\alpha \left< \bm{\lambda}^{k+1} - \bm{\lambda}, \bm{\nu}^k \right> \notag \\
\le & \left\| \bm{\lambda}^k - \bm{\lambda} \right\|^2 - 2\alpha \left< \bm{\lambda}^{k+1} - \bm{\lambda}, \bm{b}\right> \notag \\
+ & 2\alpha \sum_{i\in \mathcal{N}} \left\{ \left< \bm{\lambda}^{k+1} - \bm{\lambda}, A_i \widehat{\bm{x}}_i^k \right> - L_i \left\| \bm{\lambda}^{k+1} - \bm{\lambda}^k \right\|^2 \right\} \label{Theorem_Ineq_1}
\end{align}
where the first inequality follows from \eqref{Step_Size_2} and \eqref{Proj_Ineq}; the last one holds from the decomposition over $i$ and the definition of $\bm{\nu}^k$ in \eqref{nu_def}.

For $\forall i\in \mathcal{N}$ we have
\begin{align}
&\left< \bm{\lambda}^{k+1} - \bm{\lambda}, A_i \widehat{\bm{x}}_i^k \right> - L_i \left\| \bm{\lambda}^{k+1} - \bm{\lambda}^k \right\|^2 \notag \\
\le& \underbrace{ \left< \bm{\lambda}^{k+1} - \bm{\lambda}^{k-\delta_i^k}, A_i \widetilde{\bm{x}}_i \left(\bm{\lambda}^{k-\delta_i^k}\right) \right> - L_i \left\| \bm{\lambda}^{k+1} - \bm{\lambda}^{k-\delta_i^k} \right\|^2 }_{(\Delta_4)} \notag \\
& - \underbrace{ \left< \bm{\lambda} - \bm{\lambda}^{k-\delta_i^k}, A_i \widetilde{\bm{x}}_i \left(\bm{\lambda}^{k-\delta_i^k}\right) \right> }_{(\Delta_5)} + L_i \left\| \bm{\lambda}^k - \bm{\lambda}^{k-\delta_i^k} \right\|^2 \notag \\
\le& D_i \left( \bm{\lambda}^{k+1} \right) - \widetilde{D}_i \left( \bm{\lambda}^{k-\delta_i^k} \right) + 2\varepsilon_i \notag \\
& + \widetilde{D}_i \left( \bm{\lambda}^{k-\delta_i^k} \right) - D_i \left( \bm{\lambda} \right) + L_i \sum_{\kappa=k-\delta_i^k}^{k-1} \left\| \bm{\sigma}^\kappa \right\|^2 \notag \\
\le& D_i \left( \bm{\lambda}^{k+1} \right) - D_i \left( \bm{\lambda} \right) + 2\varepsilon_i + L_i \sum_{\kappa=k-2k_0}^{k-1} \left\| \bm{\sigma}^\kappa \right\|^2 \label{Theorem_Ineq_2}
\end{align}
where the first inequality holds from the triangle inequality; Lemma \ref{thm:lem7} is utilized in $(\Delta_4)$ and $(\Delta_5)$ in the second inequality; the last one follows from Assumption A2.

Following \eqref{Theorem_Ineq_1}, we have
\begin{align}
&\left\| \bm{\lambda}^{k+1} - \bm{\lambda} \right\|^2 - \left\| \bm{\lambda}^k - \bm{\lambda} \right\|^2 \notag \\
\stackrel{\eqref{Theorem_Ineq_2}}{\le}& - 2\alpha \left< \bm{\lambda}^{k+1} - \bm{\lambda}, \bm{b}\right> + 2\alpha \sum_{i\in \mathcal{N}} \Bigg\{ D_i \left( \bm{\lambda}^{k+1} \right) - D_i \left( \bm{\lambda} \right) \notag \\
& + 2\varepsilon_i + L_i \sum_{\kappa=k-2k_0}^{k-1} \left\| \bm{\sigma}^\kappa \right\|^2 \Bigg\} \notag \\
\stackrel{\eqref{Step_Size_2}}{\le}& 2\alpha \left( D \left( \bm{\lambda}^{k+1} \right) - D \left( \bm{\lambda} \right) \right) + 4\alpha \varepsilon_D + \sum_{\kappa=k-2k_0}^{k-1} \left\| \bm{\sigma}^\kappa \right\|^2 \label{Theorem_Ineq_3}
\end{align}

By replacing $\bm{\lambda} = \bm{\lambda}^*$ and noting that $D \left( \bm{\lambda}^* \right) \ge D \left( \bm{\lambda}^{k+1} \right)$, we have
\begin{align*}
\left\| \bm{\lambda}^{k+1} - \bm{\lambda}^* \right\|^2 \le \left\| \bm{\lambda}^k - \bm{\lambda}^* \right\|^2 + 4\alpha \varepsilon_D + \sum_{\kappa=k-2k_0}^{k-1} \left\| \bm{\sigma}^\kappa \right\|^2
\end{align*}

Note that the above inequality also holds when $k\notin \mathcal{K}_D$. Summing over $k$, we obtain
\begin{align*}
\left\| \bm{\lambda}^{k+1} - \bm{\lambda}^* \right\|^2 \le \left\| \bm{\lambda}^0 - \bm{\lambda}^* \right\|^2 + 4\alpha \left(k+1\right) \varepsilon_D + 2k_0S^k
\end{align*}

By decomposing the quadratic terms, using $\left< \bm{\lambda}^*, \bm{\lambda}^{k+1} \right> \le \left\| \bm{\lambda}^* \right\| \left\| \bm{\lambda}^{k+1} \right\|$, and noting $\left< \bm{\lambda}^0, \bm{\lambda}^* \right> \ge 0$, we get
\begin{align*}
\left\| \bm{\lambda}^{k+1} \right\|^2 - 2\left\| \bm{\lambda}^* \right\| \left\| \bm{\lambda}^{k+1} \right\| \le \left\| \bm{\lambda}^0 \right\|^2 + 4\alpha \left(k+1\right) \varepsilon_D + 2k_0S^k
\end{align*}

Solving the quadratic inequality and using $\sqrt{x+y} \le \sqrt{x} + \sqrt{y}, ~ \forall x,y\ge0$, we complete the proof. 
\end{proof}

\subsection{Convergence Analysis}
Based on Theorems \ref{thm:thm1} and \ref{thm:thm2}, we show that the asynchronous and inexact DD-DO algorithm converges in $\mathcal{O} (1 / \sqrt{k})$.

Instead of the primal sequence $\left\{ \widehat{\bm{x}}^k \right\}$ and the dual sequences $\left\{ \bm{\lambda}^k \right\}$, we consider their running averages over iteration named as the primal and dual average variables, which are defined as
\begin{align*}
\bar{\bm{x}}^k := \frac{1}{\left| \mathcal{K}_D^k \right|} \sum_{\kappa \in \mathcal{K}_D^k} \widehat{\bm{x}}^{\kappa}, ~ ~
\bar{\bm{\lambda}}^{k+1} := \frac{1}{\left| \mathcal{K}_D^k \right|} \sum_{\kappa \in \mathcal{K}_D^k} \bm{\lambda}^{\kappa+1}
\end{align*}
where $\mathcal{K}_D^k := \left\{ \kappa \in \mathcal{K}_D ~|~ \kappa \le k \right\}$.

Average is widely utilized in iterative algorithms \cite{necoara2013rate, devolder2014first, nedic2009distributed, mateos2016distributed}. Intuitively, the violation of constraints and the deviation of objectives can be reduced by averaging due to the convexity of constraints and the primal objective and the concavity of the dual objective by \cite{nedic2009approximate}.

We have the following theorem of convergence.
\begin{theorem} \label{thm:thm3}
Suppose Assumptions A1-A3 hold. If the step size satisfies
\begin{align}
0 < \alpha < \min \left\{ \frac{1}{\left(2k_0+1/2\right)L_D}, \frac{1}{2L_D} \right\}\label{Step_Size}
\end{align}
Algorithm 2 has the following convergence performance:
\begin{enumerate}[a)]
\item The violation of constraints is bounded by
\begin{align} \label{Result_Cons}
\left\| \left[ A \bar{\bm{x}}^k - \bm{b} \right]^+ \right\| \le \frac{M_{1/2}}{\sqrt{k+1}} + \frac{M_1}{k+1} 
\end{align}

\item The deviation of primal value is bounded by
\begin{align} \label{Result_Primal}
- \frac{M_{1/2}\left\| \bm{\lambda}^* \right\|}{\sqrt{k+1}} - \frac{M_1 \left\| \bm{\lambda}^* \right\|}{k+1} &\le F\left( \bar{\bm{x}}^k \right) - F^* \notag \\
& \le  N_0 + \frac{N_{1/2}}{\sqrt{k+1}} + \frac{N_1}{k+1}
\end{align}

\item The deviation of dual value is bounded by
\begin{align} \label{Result_Dual}
0 \le D^* - D\left( \bar{\bm{\lambda}}^{k+1} \right) &\le N_0 + \frac{N_{1/2}}{\sqrt{k+1}} + \frac{N_1'}{k+1}
\end{align}

\item The deviation of primal average variable is bounded by
\begin{align} \label{Result_Solution}
\left\| \bar{\bm{x}}^k - \bm{x}^* \right\|^2 \le \frac{2N_0}{c_F} + \frac{2\left(N_{1/2} + \left\|\bm{\lambda}^*\right\| M_{1/2}\right)}{c_F \sqrt{k+1}} \notag \\
+ \frac{2\left(N_1 + \left\|\bm{\lambda}^*\right\| M_1\right)}{c_F \left(k+1\right)}
\end{align}
\end{enumerate}
where $M_{1/2}, M_1, N_0, N_{1/2}, N_1$, and $N_1'$ are positive constants defined as
\begin{align*}
M_{1/2} &:= 2\sqrt{\alpha \varepsilon_D} + \frac{2 \sqrt{k_0 L_D \varepsilon_D}}{\gamma_\alpha} \\
M_1 &:= \frac{k_0+1}{\alpha} \left( 2\left\| \bm{\lambda}^* \right\| + \left\| \bm{\lambda}^0 \right\| + 2\sqrt{\frac{2k_0 \left( D^* - D^0 \right)}{\gamma_\alpha}} \right) \\
N_0 &:= 2\varepsilon_D + \frac{2k_0\left(k_0+1\right) L_D \varepsilon_D}{\alpha \gamma_\alpha^2} \\
N_{1/2} &:= \frac{4k_0(k_0+1) \sqrt{2 L_D \varepsilon_D \left( D^* - D^0 \right)}}{\alpha \gamma_\alpha^{3/2}} \\
N_1 &:= \frac{k_0+1}{2\alpha} \left( \left\| \bm{\lambda}^0 \right\|^2 + \frac{8k_0\left( D^* - D^0 \right)}{\gamma_\alpha} \right) \\
N_1' &:= \frac{k_0+1}{2\alpha} \left( \left\| \bm{\lambda}^0 - \bm{\lambda}^* \right\|^2 + \frac{8k_0\left( D^* - D^0 \right)}{\gamma_\alpha} \right)
\end{align*}
\end{theorem}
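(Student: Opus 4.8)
The plan is to combine the two growth bounds from Theorems~\ref{thm:thm1} and~\ref{thm:thm2} --- namely $\sqrt{S^k} = \mathcal{O}(\sqrt{k})$ and $\|\bm{\lambda}^{k+1}\| = \mathcal{O}(\sqrt{k})$ --- with a telescoping/averaging argument on the key one-step inequality already derived inside the proof of Theorem~\ref{thm:thm2}. The natural starting point is inequality~\eqref{Theorem_Ineq_3}, which for a general test multiplier $\bm{\lambda}\ge0$ reads
\begin{align*}
\|\bm{\lambda}^{k+1}-\bm{\lambda}\|^2 - \|\bm{\lambda}^k-\bm{\lambda}\|^2 \le 2\alpha\big(D(\bm{\lambda}^{k+1}) - D(\bm{\lambda})\big) + 4\alpha\varepsilon_D + \sum_{\kappa=k-2k_0}^{k-1}\|\bm{\sigma}^\kappa\|^2 .
\end{align*}
Summing this over $\kappa\in\mathcal{K}_D^k$, the left side telescopes (the non-$\mathcal{K}_D$ steps contribute nothing since $\bm{\lambda}^{k+1}=\bm{\lambda}^k$ there), the $\sum\|\bm{\sigma}^\kappa\|^2$ terms are absorbed into $2k_0 S^k$, and one gets
\begin{align*}
2\alpha\sum_{\kappa\in\mathcal{K}_D^k}\big(D(\bm{\lambda})-D(\bm{\lambda}^{\kappa+1})\big) \le \|\bm{\lambda}^0-\bm{\lambda}\|^2 + 4\alpha(k+1)\varepsilon_D + 2k_0 S^k .
\end{align*}
Dividing by $2\alpha|\mathcal{K}_D^k|$, invoking concavity of $D$ (Jensen) on the left to replace the average of $D(\bm{\lambda}^{\kappa+1})$ by $D(\bar{\bm{\lambda}}^{k+1})$, and using the lower bound $|\mathcal{K}_D^k| \ge (k+1)/(k_0+1)$ coming from Assumption~A2, one obtains for every $\bm{\lambda}\ge0$
\begin{align*}
D(\bm{\lambda}) - D(\bar{\bm{\lambda}}^{k+1}) \le \frac{k_0+1}{2\alpha(k+1)}\Big(\|\bm{\lambda}^0-\bm{\lambda}\|^2 + 2k_0 S^k\Big) + 2\varepsilon_D .
\end{align*}
Plugging the Theorem~\ref{thm:thm1} bound on $S^k$ (squaring $\sqrt{S^k}\le \frac{\sqrt{2L_D\varepsilon_D}}{\gamma_\alpha}\sqrt{k+1}+2\sqrt{(D^*-D^0)/\gamma_\alpha}$ and using $(a+b)^2\le 2a^2+2b^2$) turns the $2k_0 S^k/(k+1)$ term into $N_0 + N_{1/2}/\sqrt{k+1} + (\text{const})/(k+1)$; setting $\bm{\lambda}=\bm{\lambda}^*$ yields part~c), the dual suboptimality bound~\eqref{Result_Dual}, with $N_1'$ absorbing $\|\bm{\lambda}^0-\bm{\lambda}^*\|^2$.

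For part~a), the feasibility bound, I would return to the same summed inequality but now carry the term $-2\alpha\langle\bm{\lambda}^{k+1}-\bm{\lambda},\bm{b}\rangle + 2\alpha\sum_i\langle\bm{\lambda}^{k+1}-\bm{\lambda},A_i\widehat{\bm{x}}_i^k\rangle$ explicitly rather than bounding it by $D$-differences; i.e.\ keep a $2\alpha\sum_{\kappa}\langle\bm{\lambda}^{\kappa+1}-\bm{\lambda},A\widehat{\bm{x}}^\kappa-\bm{b}\rangle$ contribution. Choosing the test point $\bm{\lambda} = \bm{\lambda}^* + [A\bar{\bm{x}}^k-\bm{b}]^+$ (the standard trick: this is feasible since it is nonnegative) and using convexity of the map $\bm{x}\mapsto A\bm{x}-\bm{b}$ together with $\langle \bm{v}, [\bm{v}]^+\rangle = \|[\bm{v}]^+\|^2$ and $\langle\bm{\lambda}^*,[\bm{v}]^+\rangle\ge0$, one extracts $\|[A\bar{\bm{x}}^k-\bm{b}]^+\|^2$ on the left, bounded above by a constant over $k+1$ plus $(\text{const})\sqrt{S^k}\|[A\bar{\bm{x}}^k-\bm{b}]^+\|/(k+1)$; solving the resulting quadratic in $\|[A\bar{\bm{x}}^k-\bm{b}]^+\|$ and using $\sqrt{x+y}\le\sqrt{x}+\sqrt{y}$ gives exactly~\eqref{Result_Cons} with $M_{1/2},M_1$ as stated (here $\|\bm{\lambda}^{k+1}\|\le 2\|\bm{\lambda}^*\|+\|\bm{\lambda}^0\|+\ldots$ from Theorem~\ref{thm:thm2} controls the cross terms).

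Part~b) then follows by a sandwich. The lower bound on $F(\bar{\bm{x}}^k)-F^*$ is the classical weak-duality estimate $F(\bar{\bm{x}}^k) \ge F^* - \langle\bm{\lambda}^*, A\bar{\bm{x}}^k-\bm{b}\rangle \ge F^* - \|\bm{\lambda}^*\|\,\|[A\bar{\bm{x}}^k-\bm{b}]^+\|$, combined with part~a). For the upper bound I would write $F(\bar{\bm{x}}^k) \le \frac{1}{|\mathcal{K}_D^k|}\sum_\kappa F(\widehat{\bm{x}}^\kappa)$ by convexity, relate $\sum_\kappa F(\widehat{\bm{x}}^\kappa)$ to $\sum_\kappa \widetilde{D}(\cdot)$ terms plus $\langle\bm{\lambda}^{\kappa+1},\bm{b}-A\widehat{\bm{x}}^\kappa\rangle$ corrections (using Lemma~\ref{thm:lem7} and the inexactness offset $\varepsilon_D$), and bound everything by $D^*$ plus the same $\mathcal{O}(1/\sqrt{k})$ residuals, giving the $N_0 + N_{1/2}/\sqrt{k+1}+N_1/(k+1)$ form. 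Finally part~d) is immediate from strong convexity of $F$: $\frac{c_F}{2}\|\bar{\bm{x}}^k-\bm{x}^*\|^2 \le F(\bar{\bm{x}}^k) - F^* - \langle\nabla F(\bm{x}^*) + A^T\bm{\lambda}^*, \bar{\bm{x}}^k-\bm{x}^*\rangle$, where the first-order term is nonnegative by the variational inequality for $\bm{x}^*$ and can be bounded via the constraint violation; substituting the bounds from b) and a) gives~\eqref{Result_Solution}. The main obstacle I anticipate is bookkeeping in part~b): carefully tracking how the per-iteration inexactness $\varepsilon_i$ and the stale multipliers $\bm{\lambda}^{k-\delta_i^k}$ propagate through the primal-value estimate without losing a factor of $k$, which is exactly where Lemma~\ref{thm:lem7}'s two-sided bound and the $\sum_{\kappa=k-2k_0}^{k-1}\|\bm{\sigma}^\kappa\|^2 \le 2k_0 S^k$ absorption must be applied in tandem; the constants $N_0,N_{1/2},N_1$ are then just the residue of that accounting.
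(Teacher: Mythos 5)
Your parts b), c), d) follow essentially the paper's own proof: c) is exactly the paper's argument (sum \eqref{Theorem_Ineq_3} with $\bm{\lambda}=\bm{\lambda}^*$, average, Jensen on the concave $D$, $|\mathcal{K}_D^k|\ge (k+1)/(k_0+1)$, then Theorem~\ref{thm:thm1}); the upper bound in b) is the same device the paper uses, namely setting $\bm{\lambda}=0$ in \eqref{Theorem_Ineq_1}, converting the inner products via \eqref{D_tilde} and Lemma~\ref{thm:lem7}, using $\sum_i D_i(\bm{\lambda}^{k+1})-\langle\bm{\lambda}^{k+1},\bm{b}\rangle=D(\bm{\lambda}^{k+1})\le D^*=F^*$, and summing/averaging with Theorem~\ref{thm:thm1}; and d) matches the paper's KKT-based argument (one nit: strong convexity of $F$ gives $\tfrac{c_F}{2}\|\bar{\bm{x}}^k-\bm{x}^*\|^2\le F(\bar{\bm{x}}^k)-F^*-\langle\nabla F(\bm{x}^*),\bar{\bm{x}}^k-\bm{x}^*\rangle$; the $A^T\bm{\lambda}^*$ term enters only afterwards through \eqref{KKT_a} and complementary slackness, not inside the strong-convexity inequality as you wrote it, though your intended chain is the right one). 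Where you genuinely diverge is a): you propose the shifted test point $\bm{\lambda}=\bm{\lambda}^*+[A\bar{\bm{x}}^k-\bm{b}]^+$ with a quadratic inequality, whereas the paper's argument is much shorter: for $k\in\mathcal{K}_D$, $\bm{\lambda}^{k+1}\ge\bm{\lambda}^k+\alpha(A\widehat{\bm{x}}^k-\bm{b})$ componentwise, so summing gives $\alpha\sum_{\kappa\in\mathcal{K}_D^k}(A\widehat{\bm{x}}^\kappa-\bm{b})\le\bm{\lambda}^{k+1}$ and hence $\|[A\bar{\bm{x}}^k-\bm{b}]^+\|\le\frac{k_0+1}{\alpha(k+1)}\|\bm{\lambda}^{k+1}\|$, after which Theorems~\ref{thm:thm1} and \ref{thm:thm2} give \eqref{Result_Cons} and its constants directly. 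Your Nedi\'{c}--Ozdaglar-style route can be made to work, but it is heavier and not quite as you describe: to control the term $\langle\bm{\lambda}^*,A\bar{\bm{x}}^k-\bm{b}\rangle$ you need the primal upper bound of b) (or a saddle-point bound) first, so the parts must be reordered to avoid circularity; the shifted test point injects additional $\|[A\bar{\bm{x}}^k-\bm{b}]^+\|$ cross terms through $\|\bm{\lambda}^0-\bm{\lambda}\|^2$ that must be reabsorbed; and the constants you obtain will not be the stated $M_{1/2},M_1$ (your claim of recovering them ``exactly'' is optimistic), only the same $\mathcal{O}(1/\sqrt{k})+\mathcal{O}(1/k)$ orders. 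The paper's accumulation argument buys explicit constants, decouples a) from b), and exploits Theorem~\ref{thm:thm2}, which is precisely why that theorem was proved; your route buys generality (it does not need the dual-variable bound as a separate statement) at the cost of bookkeeping.
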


\begin{proof}
a) If $k\in\mathcal{K}_D$, we have
\begin{align*}
\bm{\lambda}^{k+1} = \left[ \bm{\lambda}^k + \alpha \left( A \widehat{\bm{x}}^k - \bm{b} \right) \right]^+ \ge \bm{\lambda}^k + \alpha \left( A \widehat{\bm{x}}^k - \bm{b} \right)
\end{align*}

Summing over $k\in\mathcal{K}_D$, we obtain
\begin{align*}
\alpha \sum_{\kappa \in \mathcal{K}_D^k} \left( A \widehat{\bm{x}}^\kappa - \bm{b} \right) \le \bm{\lambda}^{k+1} - \bm{\lambda}^0 \le \bm{\lambda}^{k+1}
\end{align*}

By taking average and norm, we have
\begin{align*}
&\left\| \left[ A \bar{\bm{x}}^k - \bm{b} \right]^+ \right\| \le \frac{\left\| \bm{\lambda}^{k+1} \right\|}{\alpha \left| \mathcal{K}_D^k \right|} \le \frac{k_0+1}{\alpha \left(k+1\right)} \left\| \bm{\lambda}^{k+1} \right\|
\end{align*}

Invoking Theorems \ref{thm:thm1} and \ref{thm:thm2} immediately yields \eqref{Result_Cons}.

b) For the left-hand side inequality of \eqref{Result_Primal},  the optimality of $F^*$ and $\bm{\lambda}^* \ge 0$ yields
\begin{align*}
F^* \le & F\left( \bar{\bm{x}}^k \right) + \left< \bm{\lambda}^*, A \bar{\bm{x}}^k - \bm{b} \right> \\
\le & F\left( \bar{\bm{x}}^k \right) + \Big\| \bm{\lambda}^* \Big\| \left\| \left[ A \bar{\bm{x}}^k - \bm{b} \right]^+ \right\| 
\end{align*}

Applying \eqref{Result_Cons} establishes left-hand side inequality.

For the right-hand side inequality of \eqref{Result_Primal}, by taking $\bm{\lambda} = 0$ in \eqref{Theorem_Ineq_1} we have at $k\in\mathcal{K}_D$
\begin{align*}
&\left\| \bm{\lambda}^{k+1} \right\|^2 - \left\| \bm{\lambda}^k \right\|^2 + 2\alpha \left< \bm{\lambda}^{k+1}, \bm{b} \right> \\
\le & 2\alpha \sum_{i\in \mathcal{N}} \left\{ \left< \bm{\lambda}^{k+1}, A_i \widehat{\bm{x}}_i^k \right> - L_i \left\| \bm{\lambda}^{k+1} - \bm{\lambda}^k \right\|^2 \right\} \\
\le & 2\alpha \sum_{i\in \mathcal{N}} \Big\{ \underbrace{\left< \bm{\lambda}^{k-\delta_i^k}, A_i \widetilde{\bm{x}}_i \left(\bm{\lambda}^{k-\delta_i^k}\right) \right>}_{(\Delta_6)} + L_i \left\| \bm{\lambda}^k - \bm{\lambda}^{k-\delta_i^k} \right\|^2\\
+ & \underbrace{\left< \bm{\lambda}^{k+1} - \bm{\lambda}^{k-\delta_i^k}, A_i \widetilde{\bm{x}}_i \left(\bm{\lambda}^{k-\delta_i^k}\right) \right> - L_i \left\| \bm{\lambda}^{k+1} - \bm{\lambda}^{k-\delta_i^k} \right\|^2}_{(\Delta_7)} \Big\} \\
\le & 2\alpha \sum_{i\in \mathcal{N}} \Big\{ \widetilde{D}_i \left( \bm{\lambda}^{k-\delta_i^k} \right) - f_i \left( \widetilde{\bm{x}}_i \left(\bm{\lambda}^{k-\delta_i^k}\right) \right) \\
&+ L_i \sum_{\kappa=k-2k_0}^{k-1} \left\| \bm{\sigma}^\kappa \right\|^2 + D_i \left( \bm{\lambda}^{k+1} \right) - \widetilde{D}_i \left( \bm{\lambda}^{k-\delta_i^k} \right) + 2\varepsilon_i \Big\} \\
\le & 2\alpha \sum_{i\in \mathcal{N}} D_i \left( \bm{\lambda}^{k+1} \right) - 2\alpha F\left( \widehat{\bm{x}}^k \right) + 4\alpha \varepsilon_D + \sum_{\kappa=k-2k_0}^{k-1} \left\| \bm{\sigma}^\kappa \right\|^2
\end{align*}
where the second inequality follows from the triangle inequality; the definition of $\widetilde{D}_i \left( \cdot \right)$ in \eqref{D_tilde} is utilized in $(\Delta_6)$; the right-hand side inequality of Lemma \ref{thm:lem7} is used in $(\Delta_7)$; the last inequality holds from \eqref{Step_Size_2} and $F ( \widehat{\bm{x}}^k ) = \sum_{i\in \mathcal{N}} f_i ( \widetilde{\bm{x}}_i ( \bm{\lambda}^{k-\delta_i^k} ) )$.

Note that 
\begin{align*}
\sum_{i\in \mathcal{N}} D_i \left( \bm{\lambda}^{k+1} \right) - \left< \bm{\lambda}^{k+1}, \bm{b} \right> = D \left( \bm{\lambda}^{k+1} \right) \le D^* = F^*
\end{align*}

Hence we have
\begin{align*}
\frac{\left\| \bm{\lambda}^{k+1} \right\|^2 - \left\| \bm{\lambda}^k \right\|^2}{2\alpha} \le  F^* - F ( \widehat{\bm{x}}^k ) + 2\varepsilon_D + \sum_{\kappa=k-2k_0}^{k-1} \frac{\left\| \bm{\sigma}^\kappa \right\|^2}{2\alpha}
\end{align*}

Summing over $k\in\mathcal{K}_D$, taking average, and using the convexity of $F$ and Theorem \ref{thm:thm1}, we have the right-hand side inequality of \eqref{Result_Primal} as
\begin{align*}
F\left( \bar{\bm{x}}^k \right) \le & F^* + 2\varepsilon_D + \frac{k_0+1}{2\alpha (k+1)} \left( \left\| \bm{\lambda}^0 \right\|^2 + 2k_0 S^k \right) \\ 
\le & F^* + N_0 + \frac{N_{1/2}}{\sqrt{k+1}} + \frac{N_1}{k+1}
\end{align*}

c) From the optimality of $D^*$, the left-hand size of \eqref{Result_Dual} holds directly. For the right-hand side, by replacing $\bm{\lambda} = \bm{\lambda}^* $ in \eqref{Theorem_Ineq_3}, we have at $k\in\mathcal{K}_D$
\begin{align*}
&\left\| \bm{\lambda}^{k+1} - \bm{\lambda}^* \right\|^2 - \left\| \bm{\lambda}^k - \bm{\lambda}^* \right\|^2 \\
&\le 2\alpha \left( D\left(\bm{\lambda}^{k+1}\right) - D^* \right) + 4\alpha\varepsilon_D + \sum_{\kappa=k-2k_0}^{k-1} \left\| \bm{\sigma}^\kappa \right\|^2
\end{align*}

Summing over $k\in\mathcal{K}_D$, taking average, and using the concavity of $D$ and Theorem \ref{thm:thm1}, we have
\begin{align*}
D^* - D\left( \bar{\bm{\lambda}}^{k+1} \right) \le & 2\varepsilon_D + \frac{k_0+1}{2\alpha (k+1)} \left( \left\| \bm{\lambda}^0 - \bm{\lambda}^* \right\|^2 + 2k_0 S^k \right) \\ 
\le & N_0 + \frac{N_{1/2}}{\sqrt{k+1}} + \frac{N_1'}{k+1}
\end{align*}

d) By \cite[Thm. 3.25]{ruszczynski2006nonlinear}, we have the KKT condition of problem \eqref{primal_problem} as
\begin{subequations}
\begin{align}
& \left< \nabla F\left(\bm{x}^*\right) + A^T\bm{\lambda}^*, \bm{x} - \bm{x}^*\right> \ge 0, ~~ \forall \bm{x} \in \mathcal{X} \label{KKT_a} \\
& A\bm{x}^* \le \bm{b}, ~~ \bm{\lambda}^* \ge 0 \\
& \left< \bm{\lambda}^*, A\bm{x}^* \right> = 0 \label{KKT_c}
\end{align}
\end{subequations}

By replacing $\bm{x} = \bar{\bm{x}}^k$ in \eqref{KKT_a}, we obtain
\begin{align}
& - \left< \nabla F\left(\bm{x}^*\right), \bar{\bm{x}}^k - \bm{x}^*\right> \notag \\
\le & \left< A^T\bm{\lambda}^*, \bar{\bm{x}}^k - \bm{x}^*\right> \notag \\
= & \left< \bm{\lambda}^*, A \bar{\bm{x}}^k - \bm{b} \right> - \left< \bm{\lambda}^*, A  \bm{x}^* - \bm{b} \right> \notag \\
= & \left< \bm{\lambda}^*, A \bar{\bm{x}}^k - \bm{b} \right> \label{nosense_ref}
\end{align}

From the strong convexity of $F\left( \cdot \right)$, we have
\begin{align*}
\frac{c_F}{2} \left\| \bar{\bm{x}}^k - \bm{x}^* \right\|^2 &\le F\left( \bar{\bm{x}}^k \right) - F^* - \left< \nabla F\left(\bm{x}^*\right), \bar{\bm{x}}^k - \bm{x}^*\right> \\
&\stackrel{\eqref{nosense_ref}}{\le} F\left( \bar{\bm{x}}^k \right) - F^* + \left< \bm{\lambda}^*, A \bar{\bm{x}}^k - \bm{b} \right> \\
&\le F\left( \bar{\bm{x}}^k \right) - F^* + \Big\| \bm{\lambda}^* \Big\| \left\| \left[ A \bar{\bm{x}}^k - \bm{b} \right]^+ \right\| 
\end{align*}

The proof is completed by nothing \eqref{Result_Cons} and the right-hand side inequality in \eqref{Result_Primal}. 
\end{proof}

\begin{remark}
\textit{(Generality)} Theorem \ref{thm:thm3} indicates the uniform ultimate boundedness (UUB) of the  DD-DO algorithm under asynchrony and inexactness. 
The convergence results considering only asynchrony \cite{Steven} or inexactness \cite{necoara2013rate} can be regarded as special cases of our result by simply setting the inexactness or asynchrony parameter as zero. On the one hand, if the algorithm is synchronous ($k_0 = 0$) and inexact ($\varepsilon_D > 0$), $N_{1/2}$ vanishes, while other parameters decrease, similarly to \cite{necoara2013rate}. It indicates that asynchrony commonly slows down the convergence and magnifies errors. On the other hand, if the algorithm is asynchronous ($k_0 > 0$) and exact ($\varepsilon_D = 0$), $M_1, N_1$ and $N_1'$ are unchanged, while the rest parameters are zero. Here, the algorithm converges to the optimal solution, similarly to \cite{Steven}. 

Moreover, we show that the asynchronous algorithm converges in $\mathcal{O} ( 1 / k )$, which, to the best of our knowledge, has not been presented in the existing literature \cite{alkano2017asynchronous, bolognani2014distributed, magnusson2020distributed, Steven, lee2015convergence, notarnicola2017distributed}.

\begin{corollary}
Suppose Assumptions A1-A3 hold. If the step size satisfies \eqref{Step_Size}, the asynchronous and exact version of Algorithm 2 has the following convergence performance:
\begin{align*}
\left\| \left[ A \bar{\bm{x}}^k - \bm{b} \right]^+ \right\| &\le  \frac{M_1}{k+1} \\
- \frac{M_1 \left\| \bm{\lambda}^* \right\|}{k+1} \le F\left( \bar{\bm{x}}^k \right) - F^* &\le \frac{N_1}{k+1} \\
0 \le D^* - D\left( \bar{\bm{\lambda}}^{k+1} \right) &\le \frac{N_1'}{k+1} \\
\left\| \bar{\bm{x}}^k - \bm{x}^* \right\|^2 &\le \frac{2\left(N_1 + \left\|\bm{\lambda}^*\right\| M_1\right)}{c_F \left(k+1\right)}
\end{align*}
\end{corollary}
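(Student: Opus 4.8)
The plan is to obtain this Corollary as an immediate specialization of Theorem~\ref{thm:thm3} to the case in which every subproblem is solved exactly, i.e.\ $\widetilde{\bm{x}}_i(\bm{\lambda}) = \bm{x}_i(\bm{\lambda})$ for all $i \in \mathcal{N}$ and all $\bm{\lambda} \in \mathbb{R}^m_+$. First I would check that the hypotheses of Theorem~\ref{thm:thm3} remain in force: Assumptions A1 and A2 are unchanged, and Assumption A3 holds with $\varepsilon_i = 0$ for every $i$, since the exact solution trivially satisfies $\left| D_i(\bm{\lambda}) - \widetilde{D}_i(\bm{\lambda}) \right| = 0 \le \varepsilon_i$; hence the aggregate inexactness parameter is $\varepsilon_D = \sum_{i\in\mathcal{N}} \varepsilon_i = 0$. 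The step-size condition \eqref{Step_Size} does not involve $\varepsilon_D$, so it is exactly the same requirement, and Theorems~\ref{thm:thm1} and~\ref{thm:thm2} both apply verbatim.

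Next I would substitute $\varepsilon_D = 0$ into the six constants defined in Theorem~\ref{thm:thm3}. Inspecting the definitions, $M_{1/2}$, $N_0$, and $N_{1/2}$ are each proportional to a positive power of $\varepsilon_D$ (to $\sqrt{\varepsilon_D}$ or $\varepsilon_D$), so all three vanish; meanwhile $M_1$, $N_1$, and $N_1'$ do not depend on $\varepsilon_D$ and are therefore unchanged. In particular, with $\varepsilon_D = 0$ Theorem~\ref{thm:thm1} yields $\sqrt{S^k} \le 2\sqrt{(D^* - D^0)/\gamma_\alpha}$, so $S^k$ is bounded uniformly in $k$ by the constant $4(D^* - D^0)/\gamma_\alpha$, which is precisely what keeps the $k_0 S^k$ contributions in the proof of Theorem~\ref{thm:thm3} bounded. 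Feeding these simplified constants into \eqref{Result_Cons}, \eqref{Result_Primal}, \eqref{Result_Dual}, and \eqref{Result_Solution} and deleting the now-zero terms gives exactly the four displayed inequalities of the Corollary, and the rate of each bound is $\mathcal{O}(1/k)$ as claimed in the preceding remark.

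Since this is a direct corollary, there is no genuine technical obstacle; the only care required is bookkeeping — verifying term by term which of the six constants carry a factor of $\varepsilon_D$ and which do not, and confirming that the lower-bound inequalities in \eqref{Result_Primal} and \eqref{Result_Dual} also simplify correctly (the bound $D^* - D(\bar{\bm{\lambda}}^{k+1}) \ge 0$ is unaffected, and the lower bound on $F(\bar{\bm{x}}^k) - F^*$ simply loses its $M_{1/2}$ term). Nothing needs to be re-derived from scratch.
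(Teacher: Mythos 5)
Your proposal is correct and coincides with the paper's own argument: the corollary is obtained exactly as you describe, by specializing Theorem~\ref{thm:thm3} to $\varepsilon_D = 0$, observing that $M_{1/2}$, $N_0$, $N_{1/2}$ vanish while $M_1$, $N_1$, $N_1'$ are independent of $\varepsilon_D$ and hence unchanged. Your bookkeeping of which constants carry factors of $\varepsilon_D$ and your note that Theorem~\ref{thm:thm1} then gives a uniform bound on $S^k$ match the paper's reasoning.
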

\end{remark}

\section{Illustrative Example}
In this section, the convergence results of the asynchronous and inexact DD-DO algorithm are demonstrated by numerical simulations carried on a 6-agent system.

\subsection{Overview of Implementation}
The simulation is carried on a desktop with Intel i7-10700 CPU and 16 GB memory. The simulation platform is MATLAB 2016B, and commercial solver CPLEX \cite{website:CPLEX} is utilized to solve subproblems with the intermediary toolbox YALMIP {website:YALMIP}.

The dual decomposition algorithm is applied to the network utility maximization (NUM, detailed description can be found in \cite{Chiang1, Chiang2, Chiang3}) problem. There are $n$ sources (agents) connected by $m$ links, where agent $i$ wants to maximize its utility $U_i \left( x_i \right)$ with respect to the resource transmission rate $x_i$ through the given static path. The system congestion is the maximal transmission capacity $b_j$ of every link $j$. The NUM problem is formulated as
\begin{align*}
\min_{\bm{x}} ~ & F\left(\bm{x}\right) = - \sum_{i} U_i\left(x_i\right) = - \sum_{i} C_b - C_a \left( x_i - \overline{x}_i \right)^2 \\
\text{s.t.} ~ & x_i \in \mathcal{X}_i = \left\{ y ~|~ \underline{x}_i \le y \le \overline{x}_i \right\}, ~ \forall i \\
& A \bm{x} \le \bm{b}
\end{align*}
where the matrix $A$ indicates the topology of the network as
\begin{align*}
A_{j,i} = \left\{
\begin{array}{l}
1,  \quad \text{sourse} ~i~ \text{goes through link} ~ j\\
0,  \quad \text{otherwise}
\end{array} \right.
\end{align*}

Consider a MAS with 6 sources and 7 links, whose topology is presented in Fig. \ref{fig_Topology}. The parameters of agents are provided in Table \ref{Agent_Data}. The capacities of links are as follows
\begin{align*}
\bm{b} = \left[ 15, 17, 20, 15, 20, 20, 15 \right]^T
\end{align*}

\begin{figure}[!t]
\centering
\includegraphics[width=0.35\textwidth]{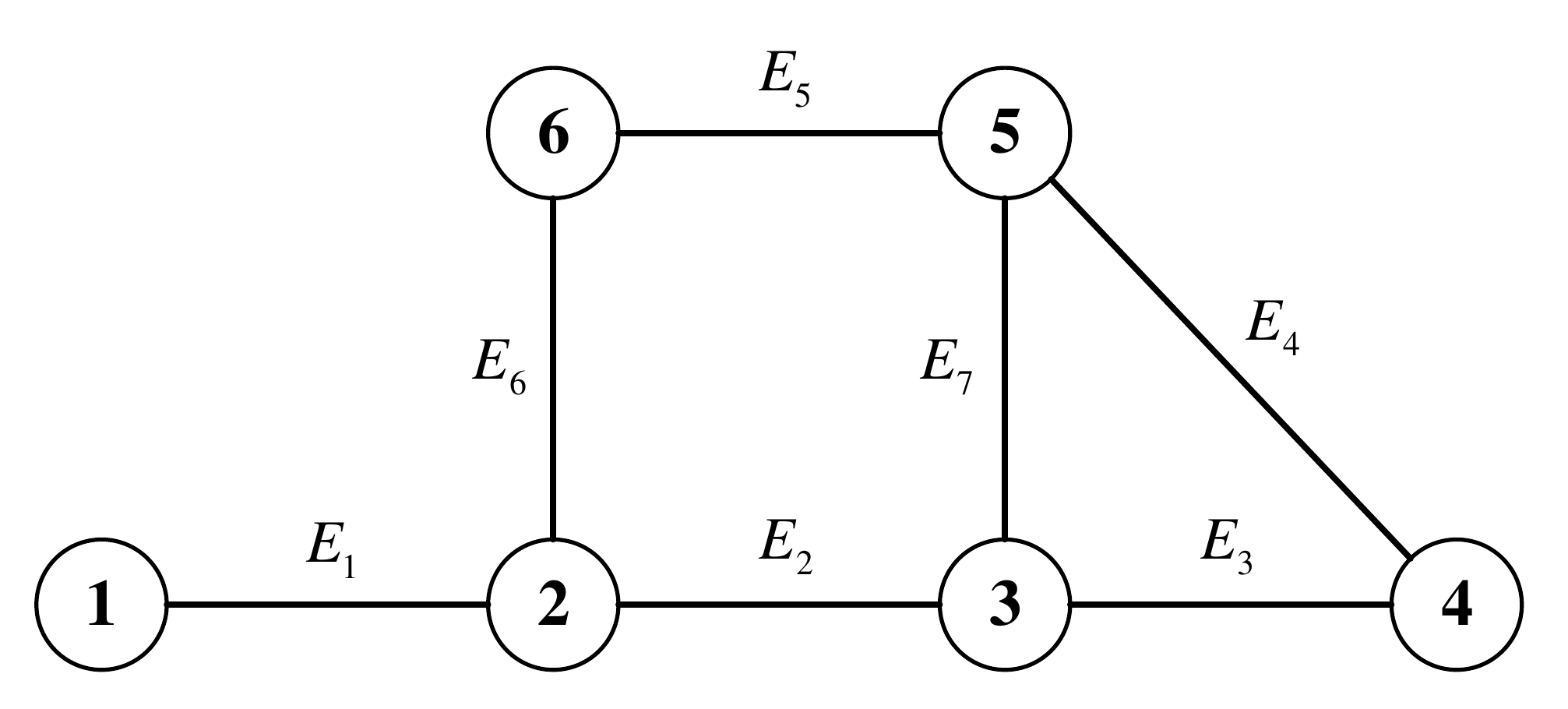}
\caption{The topology of the 6-agent network.}
\label{fig_Topology}
\end{figure}

\begin{table}[!t]
\centering
\caption{Agent Data}
\label{Agent_Data}
\begin{tabular}{ c c c c c c }
\hline
Agent & $\underline{x}_s$ & $\overline{x}_s$ & $C_a$ & $C_b$ & Path \\
\hline
1	&	0	&	5.9	&	1.8	&	62.658	&	$E_1 \to E_2 \to E_7$\\
2	&	0	&	6.6	&	2.2	&	95.832	&	$E_6 \to E_5 \to E_4$\\
3	&	0	&	7.5	&	2.7	&	151.875	&	$E_7 \to E_5 \to E_6 \to E_1$\\
4	&	0	&	4.8	&	3.5	&	80.640	&	$E_3 \to E_2 \to E_6 \to E_5$\\
5	&	0	&	5.4	&	1.2	&	34.992	&	$E_4 \to E_3 \to E_2 \to E_1$\\
6	&	0	&	8.1	&	0.5	&	32.805	&	$E_6 \to E_2 \to E_3 \to E_4$\\
\hline
\end{tabular}
\end{table}

To describing the asynchrony, we generate local clocks by the function \textit{randi} in MATLAB, where the time interval follows the pseudo discrete uniform distribution within $\left[ 0, k_0-1 \right]$. In terms of inexactness, we obtain the exact optimal value $D_i \left(\bm{\lambda}\right)$ by CPLEX, and then search for the inexact solution (randomly pick one if not unique) as
\begin{align*}
\widetilde{\bm{x}}_i \left( \bm{\lambda} \right) \in \arg\max_{\bm{x}_i} ~ & \left| \mathcal{L}_i \left( \bm{x}_i ; \bm{\lambda} \right) - D_i \left(\bm{\lambda}\right) \right| \\
\text{s.t.} ~ & \left| \mathcal{L}_i \left( \bm{x}_i ; \bm{\lambda} \right) - D_i \left(\bm{\lambda}\right) \right| \le \varepsilon_i
\end{align*}
which is the worst case and hence representative.

\subsection{Convergence}
The asynchronous and inexact DD-DO algorithm is implemented in the NUM problem. Fig. \ref{fig_Cons} to \ref{fig_X} are the curves of the violation of constraints, the relative errors of the primal and dual objectives, and the deviation of the primal variable during iteration, respectively. The horizontal coordinate is the iterative index and the vertical coordinate is the corresponding value, both in the exponential form. 

Keeping the step size $\alpha = 0.004$ and changing the asynchrony parameter ($k_0 = 0 / 4$) and the inexactness parameter ($\varepsilon_D = 0 / 30$), we obtain 4 types of curves in blue (synchronous and exact), red (asynchronous and exact), green (synchronous and inexact), and yellow (asynchronous and inexact). The solid lines are the real iterative curves, while the dashed lines are the upper bounds calculated by \eqref{Result_Cons} - \eqref{Result_Solution}.

\begin{figure}[!t]
	\centering
	\includegraphics[width=0.5\textwidth]{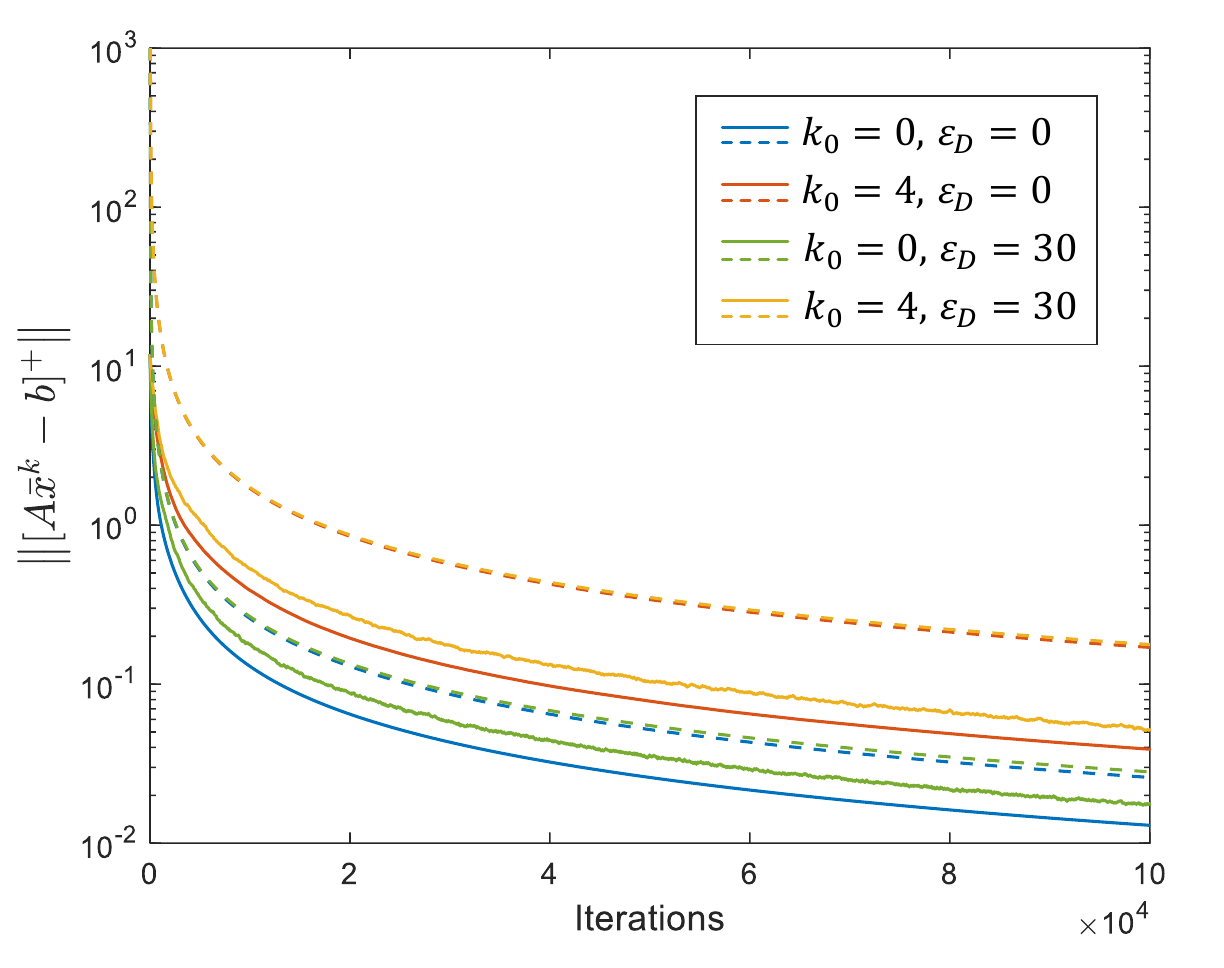}
	\caption{The violation of constraints during iteration.}
	\label{fig_Cons}
\end{figure}

\begin{figure}[!t]
	\centering
	\includegraphics[width=0.5\textwidth]{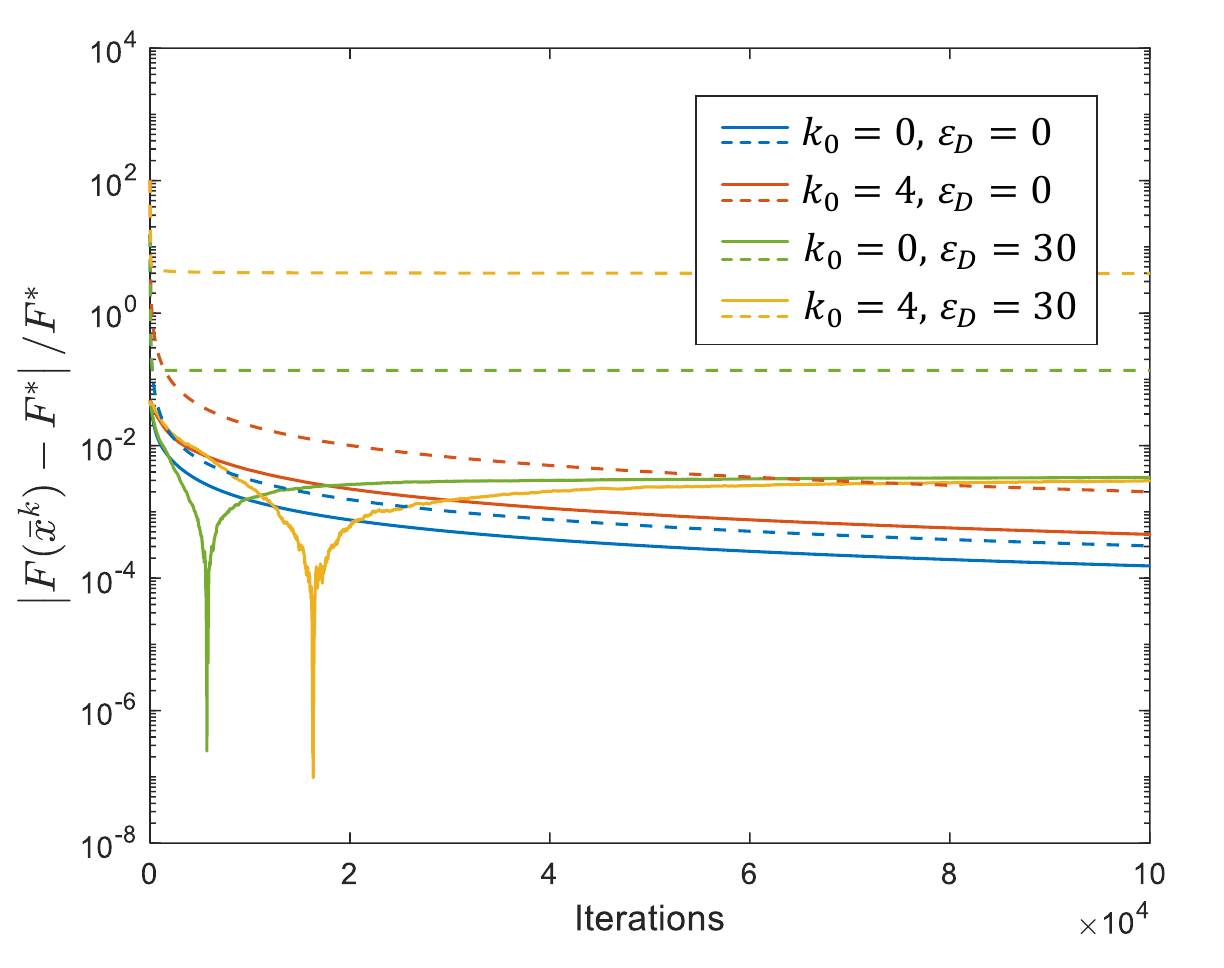}
	\caption{The relative error of the primal objective during iteration.}
	\label{fig_Primal}
\end{figure}

\begin{figure}[!t]
	\centering
	\includegraphics[width=0.5\textwidth]{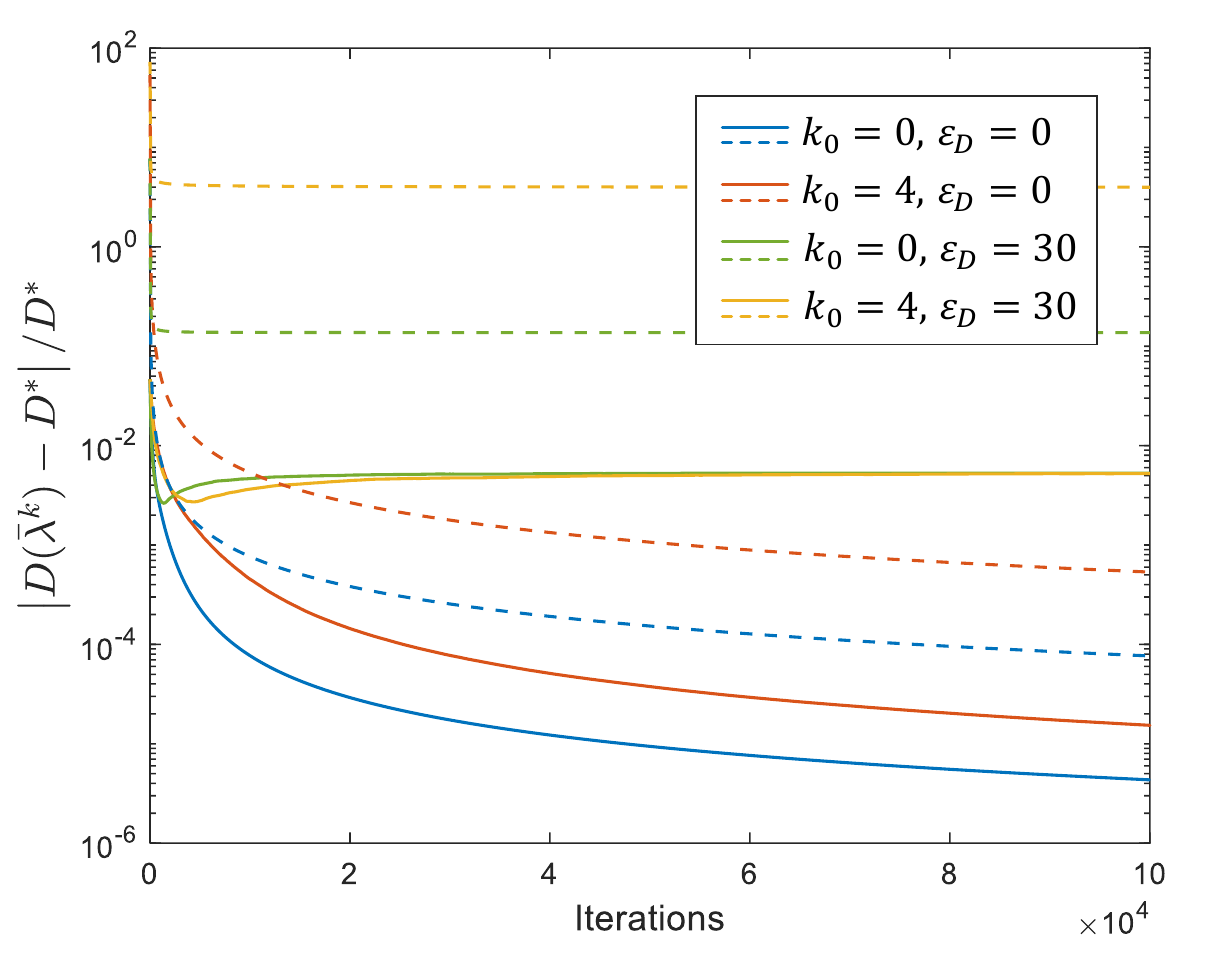}
	\caption{The relative error of the dual objective during iteration.}
	\label{fig_Dual}
\end{figure}

\begin{figure}[!t]
	\centering
	\includegraphics[width=0.5\textwidth]{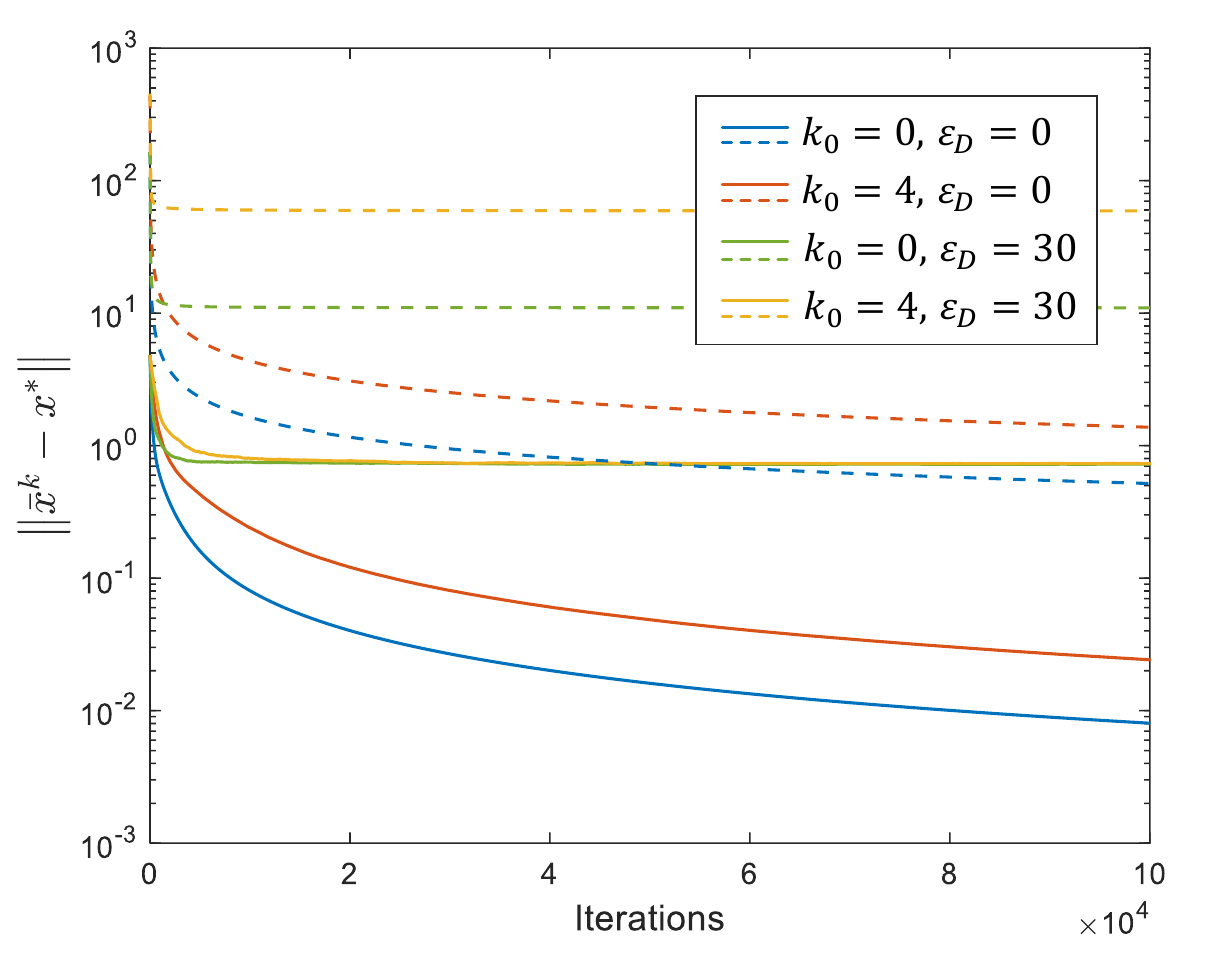}
	\caption{The deviation of the primal variable during iteration.}
	\label{fig_X}
\end{figure}

The violation of constraints during iteration is shown in Fig. \ref{fig_Cons}. The curves all converge, even if the scenario is inexact or/and asynchronous. It verifies the convergence result \eqref{Result_Cons} in Theorem \ref{thm:thm3}, i.e., the feasibility is always satisfied. From this figure, we also find that both asynchrony and inexactness increase the violation of constraints.

Fig. \ref{fig_Primal} and Fig. \ref{fig_Dual} are the relative errors of the primal and dual objectives during iteration. The comparison of real and dashed curves verifies the convergence results \eqref{Result_Primal} and \eqref{Result_Dual} in Theorem \ref{thm:thm3}, i.e., the value of objective converges to a neighborhood of the optimal value. In Fig. \ref{fig_Primal}, the severe fluctuations of the green and yellow curves result from the transformation of $F\left( \bar{\bm{x}}^k \right) - F^*$ from a negative value to a positive one. It is clear that the inexactness has a remarkable influence on the upper bounds on the ranges of neighborhoods, while the convergence speed slows down from synchronous to asynchronous scenarios.

The deviation of the primal variable from the optimal solution is shown in Fig. \ref{fig_X}. The simulation results verify our theoretic analysis on the solution accuracy \eqref{Result_Solution} in Theorem \ref{thm:thm3}. If the solutions to subproblems are inexact, the primal variable sequence converges to a neighborhood of the optimal solution. Otherwise, the deviation decreases to zero and the optimal solution can be obtained. On the other hand, asynchrony slows down the convergence of the primal variable, similarly to the influence on the objective values, as indicated in Theorem \ref{thm:thm3}.

\section{Conclusion}
In this paper, we have studied the DD-DO algorithm in MASs. It is the first time that both the asynchrony in communication and the inexactness in solving subproblems are considered in the dual decomposition algorithm. Due to the asynchronous communication or non-identical computation clocks, agents have to solve their subproblems with the previously stored information. Limited by computational accuracy, the solutions to subproblems are inexact.
We have proved that values of primal and dual objectives converge to some neighborhoods of the optimal values, the solution converges to some neighborhood of the optimal solution, and the violation of constraints vanishes, all in the convergence rate of $\mathcal{O} ( 1 / \sqrt{k} )$.
Our convergence results generalize and unify existing works of dual decomposition algorithms considering only asynchrony or inexactness. Numerical simulation verifies the convergence performance of the asynchronous and inexact DD-DO algorithm. 

It is expected that this work could provide useful insights and facilitate the implementations of dual decomposition algorithms in complicated realistic systems, which would inspire more applications in a wide broad of fields.

\bibliographystyle{IEEEtran}
\bibliography{mybib}

\end{document}